\theoremstyle{plain}
\newtheorem{theorem}{Theorem}[section]
\newtheorem{corollary}[theorem]{Corollary}
\newtheorem{lemma}[theorem]{Lemma}
\theoremstyle{definition}
\newtheorem{definition}[theorem]{Definition}
\theoremstyle{remark}
\newtheorem*{theorem*}{Theorem \ref{mainthm}}
\numberwithin{equation}{section}
\newcommand{\id}{{\rm id}}
\newcommand{\wc}{\rightharpoonup} %weakly converge
\newcommand{\grass}[2]{\mathbf{G}(#1,#2)}
\newcommand{\Var}{\mathbf{V}}     % general varifolds
\newcommand{\var}{\mathbf{v}}     % varifolds generated by rectifiable sets
\newcommand{\HM}{\mathcal{H}}% the Hausdorff measure
\newcommand{\ud}{\ensuremath{\,\mathrm{d}}}% the upright "d" in integrals
\DeclareMathOperator{\VarTan}{VarTan}   % varifold tangents
\DeclareMathOperator{\Tan}{Tan}% tangent space
\DeclareMathOperator{\Lip}{Lip}% Lipschitz constant
\DeclareMathOperator{\dist}{dist}% distance
\DeclareMathOperator{\ap}{ap} % something approximate
\newcommand{\scale}[1]{\boldsymbol{\mu}_{#1}}
\newcommand{\mr}{ \mathop{ \rule[1pt]{.5pt}{6pt} \rule[1pt]{4pt}{0.5pt} }\nolimits }
\begin{document}
\title{A note on the convergence of almost minimal sets}
\author{Yangqin Fang}
\address{Yangqin Fang \\ Max-Planck-Institut f\"ur Gravitationsphysik,
Am M\"uhlenberg 1, 14476 Potsdam, Germany}
\email{yangqin.fang@aei.mpg.de}
\keywords{Quasiminimal sets, Almost-minimal sets,
Varifolds, Hausdorff convergence, Integrands}

\date{}
\maketitle
\begin{abstract}
	In this paper, we will show that Hausdorff convergence and varifold
	convergence coincide on the class of almost minimal sets.
\end{abstract}
\section{Introduction and notation}
We see obvious in general Huasdorff distance convergence for a sequence of sets 
do not implies the varifold convergence of the associate sequence of varifolds. 
But in this paper, we will show that the implication is true in case that
restrict on general quasiminimal sets with the total Huasdorff measure of the 
sequence tending to the total Huasdorff measure of the limit set. It is also 
true by replacing the total Huasdorff measure with the integral of an elliptic
integrand. As a consequence, we may get that the Hausdorff convergence and varifold
convergence coincide on almost minimal sets. 

An $m$-varifold on an open subset $ U\subseteq\mathbb{R}^{n}$ is a Radon
measure on $ U\times \grass{n}{m}$. We denote by $\Var_{m}( U)$ the
collection $m$-varifolds on $ U$. It can be equipped with a weak topology
given by saying that  $ V_i\wc V $ if 
\[
	\int \varphi \ud V_i\to \int \varphi \ud V,
\]
for all compactly supported, continuous real valued function $\varphi$ on
$ U\times \grass{n}{m}$.

Given any varifold $V$, we can get a corresponding Radon measure $\|V\|$ on 
$ U$ defined by 
\[
	\|V\|(A)=V(A\times \grass{n}{m}),\ \text{for }A\subseteq U.
\]
For any Borel regular $\mu$ measure on $U$ and $x\in U$, we let
$\Theta_{\ast}^{m}(\mu,x)$ and $\Theta^{\ast m}(\mu,x)$ be the lower and upper
$m$-density of $\mu$ at $x$, see \cite{Allard:1972}, if they are equal, we 
will denote it by $\Theta^{m}(\mu,x)$, called the $m$-density. For any set
$E\subseteq U$, $\Theta^{m}(E,x)$ is understood as the $m$-density of $\HM^m\mr
E$ at $x$.

A subset $E\subseteq\mathbb{R}^{n}$ is called $m$-rectifiable, if there exists a 
sequence of Lipschitz mappings $f_{i}:\mathbb{R}^{m}\to\mathbb{R}^{n}$ such that 
\[
	\HM^m\left( E\setminus f_{i}(\mathbb{R}^{m}) \right)=0.
\]
$E$ is called purely $m$-unrectifiable (or $m$-irregular) if 
$\HM^m(E\cap F)=0$ for any $m$-rectifiable set $F$, see for example  
\cite[Definition 15.3]{Mattila:1995} or \cite[3.2.14]{Federer:1969}.

Let $E\subseteq\mathbb{R}^{n}$ be a $m$-rectifiable set, $x\in E$ be any point.
An $m$-plane $\pi$ is called an approximate tangent plane if 
\[
	\limsup_{r\to 0}r^{-m}\HM^d(E\cap B(x,r))>0
\]
and for any $\varepsilon>0$,
\[
	\lim_{r\to 0}r^{-m}\HM^d(E\cap B(x,r)\setminus
	\mathcal{C}(x,\pi,r,\varepsilon))=0,
\]
where $\mathcal{C}(x,\pi,r,\varepsilon)=\{ y\in B(x,r)\mid \dist(y-x,\pi)\leq
\varepsilon |y-x| \}$. An $m$-plane $\pi$ is called a (true) tangent plane if 
for any $\varepsilon>0$, there exists $r_{\varepsilon}>0$ such that 
\[
	E\cap B(x,r)\subseteq \mathcal{C}(x,\pi,r,\varepsilon) \text{ for
	}0<r<r_{\varepsilon}.
\]
We will denote by $\Tan(E,x)$ the tangent plane of $E$ at $x$, if it exists. 

Let $E\subseteq  U$ be an $m$-rectifiable set. Then for $\HM^m$ almost every
$x\in E$, there is an unique approximate tangent plane of $E$ at $x$, see for
example \cite[Theorem 3.2.19]{Federer:1969} or \cite[Theorem 15.11]{Mattila:1995}. 
If additionally $E$ is local Ahlfors regular, that is, there exists $C\geq 1$ 
and $r_0>0$ such that for any $x\in E$, 
$0<r<r_0$ with $B(x,2r)\subseteq U$, we have that 
\begin{equation}\label{eq:AR}
	C^{-1}r^m\leq \HM^m(E\cap B(x,r))\leq Cr^m,
\end{equation}
then in this case, every approximate tangent plane is a true tangent plane.

Let $E\subseteq U$ be any set such that $\HM^m(E\cap K)<\infty$ for any
compact sets $K\subseteq U$. We define the associates varifold $\var(E)$,
by setting
\[
	\var(E)(\beta)=\int_{E_{rec}}\beta(x,\Tan(E_{rec},x))\ud\HM^d(x)+
	\int_{E_{irr}}\beta(x,T)\ud \gamma_{n,m}(T)\ud\HM^m(x)
\]
for any continues function $ \beta: U\times \grass{n}{m}\to \mathbb{R}$ with compact
support, where we decompose $E$ as the union $E_{rec}\cup E_{irr}$, $E_{rec} $ is $m$-rectifiable,
$E_{irr}$ is purely $m$-unrectifiable, $\gamma_{n,m}$ denotes the Haar measure 
on $\grass{n}{m}$.

On the power set of $\mathbb{R}^n$, we define the normalized local Hausdorff 
distance $d_{x,r}$ by the formula 
\[
	d_{x,r}(X,Y)=\frac{1}{r}\sup\{\dist(z,Y):z\in X\cap
	B(x,r)\}+\frac{1}{r}\sup\{\dist(y,X):y\in Y\cap B(x,r)\};
\]
a sequence $E_k\subseteq U$ converges to a set $E\subseteq U$ in local Hausdorff
distance, by definition, we mean that for any $x\in U$ and $0<r<\dist(x,U^c)$, 
$d_{x,r} (E_k,E)\to 0$ as $k\to \infty$.
\section{Convergence of quasiminimal sets}
For any $m$-plane $T$, we will denote by $T_{\natural}$ the orthogonal 
projection of $\mathbb{R}^n$ onto $T$. For any $x\in \mathbb{R}^n$ and $r>0$,
we denote by $\scale{x,r}:\mathbb{R}^n\to \mathbb{R}^n$ the mapping
given be $\scale{x,r}(y)=r^{-1}(y-x)$. For any $m$-rectifiable set $E\subseteq
\mathbb{R}^n$ and mapping $\varphi:E\to\mathbb{R}^n$, we will denote by $\ap
J_m \varphi$ the approximate Jacobian of $\varphi$, see 
\cite[Theorem 3.2.22]{Federer:1969}. 
\begin{lemma}\label{le:convergence}
	Let $\{E_k\}$ be a sequence of $m$-rectifiable subsets in $U$. Suppose that
	there is an $m$-rectifiable set $E\subseteq U$ such that 
	$\HM^m(E)= \lim_{k\to\infty}\HM^m(E_k)<+\infty$,
	and for $\HM^d$-a.e. $x\in E$, by setting $T=\Tan(E,x)$, 
	\begin{equation}\label{eq:StrCon}
		\lim_{r\to 0}\varliminf_{k\to \infty}\HM^m(T_{\natural}\circ
		\scale{x,r} ( E_k\cap B(x,r)))\geq \omega_m.
	\end{equation}
	Then we have that  
	\[
		\var(E_k)\wc \var(E).
	\]
\end{lemma}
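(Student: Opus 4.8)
The plan is to work with subsequential weak-$*$ limits of $\{\var(E_k)\}$ and show each equals $\var(E)$. Because every $E_k$ is $m$-rectifiable, its purely unrectifiable part is $\HM^m$-null, so $\var(E_k)(\beta)=\int_{E_k}\beta(y,\Tan(E_k,y))\ud\HM^m(y)$ and $\|\var(E_k)\|=\HM^m\mr E_k$. The total masses $\var(E_k)(U\times\grass{n}{m})=\HM^m(E_k)$ are bounded, so by weak-$*$ compactness of bounded Radon measures a subsequence satisfies $\var(E_k)\wc W$ on $U\times\grass{n}{m}$; integrating out the plane variable gives $\HM^m\mr E_k\wc\|W\|$, whence $\|W\|(U)\le\varliminf_k\HM^m(E_k)=\HM^m(E)$ by lower semicontinuity on the open set $U$. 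Since the limiting value $W(\beta)$ will turn out to be $\var(E)(\beta)$ for every such subsequence, the whole sequence converges, so it suffices to prove $W=\var(E)$.

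The engine is an area--formula estimate. Fix $x\in E$ where $T:=\Tan(E,x)$ exists and set $J_T(S)=|\det(T_\natural|_S)|$ for $S\in\grass{n}{m}$; this is continuous with $0\le J_T\le1$, and $J_T(S)=1$ if and only if $S=T$. As $\scale{x,r}$ is a similarity and leaves tangent planes unchanged, the $m$-dimensional approximate Jacobian of $T_\natural\circ\scale{x,r}$ along $\Tan(E_k,y)$ is $r^{-m}J_T(\Tan(E_k,y))$, so the area formula (see \cite[Theorem 3.2.22]{Federer:1969}) yields
\[
\HM^m\big(T_\natural\circ\scale{x,r}(E_k\cap\cball{x}{r})\big)\le r^{-m}\int_{E_k\cap\cball{x}{r}}J_T(\Tan(E_k,y))\ud\HM^m(y).
\]
Feeding this into hypothesis \eqref{eq:StrCon} shows that for $\HM^m$-a.e.\ $x\in E$,
\begin{equation}\label{eq:star}
\lim_{r\to0}\varliminf_{k\to\infty}r^{-m}\int_{E_k\cap\cball{x}{r}}J_T(\Tan(E_k,y))\ud\HM^m(y)\ge\omega_m.
\end{equation}

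I first identify $\|W\|$. Since $J_T\le1$, inequality \eqref{eq:star} forces $\varliminf_{r\to0}r^{-m}\varliminf_{k}\HM^m(E_k\cap\cball{x}{r})\ge\omega_m$; as $\cball{x}{r}$ is compact, the portmanteau inequality $\|W\|(\cball{x}{r})\ge\limsup_k\HM^m(E_k\cap\cball{x}{r})\ge\varliminf_k\HM^m(E_k\cap\cball{x}{r})$ gives the lower density bound $\Theta_{\ast}^{m}(\|W\|,x)\ge1$ for $\HM^m$-a.e.\ $x\in E$. By the Besicovitch differentiation theorem the derivative $\ud\|W\|/\ud(\HM^m\mr E)$ exists $\HM^m\mr E$-a.e.\ and coincides with $\lim_{r\to0}\|W\|(\cball{x}{r})/(\omega_m r^m)\ge1$; hence $\|W\|\ge\HM^m\mr E$, and combined with $\|W\|(U)\le\HM^m(E)$ this gives $\|W\|=\HM^m\mr E$.

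Finally I pin down the planes by disintegrating $W=\int_E W_y\ud\HM^m(y)$ into probability measures $W_y$ on $\grass{n}{m}$. For a fixed $P\in\grass{n}{m}$ the measures $A\mapsto\int_{E_k\cap A}J_P(\Tan(E_k,y))\ud\HM^m(y)$ converge weak-$*$ to $\lambda_P:=\big(\int J_P\ud W_y\big)\,\HM^m\mr E$, whose $m$-density at each of its Lebesgue points equals $h_P(x):=\int J_P\ud W_x$. Fixing a countable dense family $\{P_j\}\subseteq\grass{n}{m}$, at $\HM^m$-a.e.\ $x\in E$ every $h_{P_j}$ has a Lebesgue point; since $P\mapsto J_P$ is uniformly continuous, letting $P_j\to T=\Tan(E,x)$ shows the $m$-density of $\lambda_T$ at $x$ equals $h_T(x)=\int J_T\ud W_x$. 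But \eqref{eq:star} together with $\|W\|=\HM^m\mr E$ forces this density to be $\ge1$, while $J_T\le1$ forces $h_T(x)\le1$; hence $\int J_T\ud W_x=1$, and as $J_T=1$ only at $S=T$ we conclude $W_x=\delta_{\Tan(E,x)}$ for $\HM^m$-a.e.\ $x$, i.e.\ $W=\var(E)$. The crux of the argument is precisely this last step: the natural integrand $J_{\Tan(E,y)}$ depends only measurably on $y$ and cannot be inserted into the weak-$*$ convergence directly, which is why the countable-dense-plane device and Lebesgue differentiation are needed; by contrast the interchange between the order $\lim_{r}\varliminf_{k}$ in \eqref{eq:StrCon} and the $\limsup_k$ furnished by portmanteau on compact balls is harmless, since only lower bounds enter and $\limsup_k\ge\varliminf_k$.
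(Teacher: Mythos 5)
Your proof is correct, and up to the identification of the weight $\|W\|$ it runs parallel to the paper: both arguments pass to a subsequential weak-$\ast$ limit, use the hypothesis (through the fact that the tangential Jacobian of the projection is at most $1$) together with portmanteau on closed balls to get $\Theta_{\ast}^{m}(\|W\|,x)\geq 1$ for $\HM^m$-a.e.\ $x\in E$, and combine this with $\|W\|(U)\leq \lim_k\HM^m(E_k)=\HM^m(E)$ to conclude $\|W\|=\HM^m\mr E$. (You even extract this density bound straight from \eqref{eq:StrCon}, which lets you skip the paper's preliminary Vitali-covering proof of lower semicontinuity on open sets.) Where you genuinely diverge is in pinning down the Grassmannian part. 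The paper blows up: for a.e.\ $x$ it takes a tangent varifold $C\in\VarTan(V,x)$, combines the area formula with the estimate $\|Q_{\ell}(y)-T\|^2\leq 2(1-\ap J_m\Phi_{\ell}(y))$ and H\"older to show that the integral of $\|Q_{\ell}(y)-T\|$ over the rescaled sets tends to $0$, then tests against smooth functions and invokes the cone property to conclude $C=\var(T)$. You instead stay at unit scale: you disintegrate $W=\int_E W_y\ud\HM^m(y)$, push the fixed-plane weights $J_P$ through the weak-$\ast$ convergence to obtain $\lambda_P=h_P\,\HM^m\mr E$, and use Lebesgue differentiation along a countable dense family of planes (plus uniform continuity of $(P,S)\mapsto J_P(S)$) to evaluate the density of $\lambda_T$ at a.e.\ $x$ with $T=\Tan(E,x)$; the equality case of $J_T\leq 1$ then forces $W_x=\delta_{\Tan(E,x)}$. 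Both proofs rest on the same geometric engine --- the projection Jacobian is $\leq 1$ with equality exactly when the planes coincide, and \eqref{eq:StrCon} forces it to equal $1$ on average in the limit --- but your version trades the paper's quantitative plane-distance estimate and tangent-varifold formalism for soft measure theory. What each buys: the paper's route produces the extra statement $\VarTan(V,x)=\{\var(\Tan(E,x))\}$, but it leaves implicit the final (standard, yet not quite trivial) passage from that statement plus $\|V\|=\HM^m\mr E$ to $V=\var(E)$; your route outputs $W=\var(E)$ directly, and your countable-dense-plane device is precisely what handles the merely measurable dependence of $\Tan(E,x)$ on $x$ --- the issue the paper sidesteps by fixing $x$ before rescaling.
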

\begin{proof}
	We first prove that for any open set $\mathcal{O}\subseteq U$, 
	\begin{equation}\label{eq:lsc}
		\HM^m(E\cap \mathcal{O})\leq \varliminf_{k\to \infty}\HM^m(E_k\cap \mathcal{O}).
	\end{equation}
	Since $E$ is rectifiable, we have that for $\HM^m$-a.e. $x\in E$, denote by
	$E^1$ the collection of such point,
	\[
		\lim_{r\to 0}\frac{\HM^m(E\cap B(x,r))}{\omega_m r^m}=1.
	\]
	For any $\varepsilon>0$ fixed, we can find $r_{x,1}>0$ such that for any
	$0<r<r_{x,1}$, 
	\[
		(1-\varepsilon)\omega_m r^m\leq \HM^m(E\cap B(x,r))\leq (1+\varepsilon)\omega_m r^m.
	\]
	We get from \eqref{eq:StrCon} that there exist $r_{x,2}>0$ and
	$k_{x}>0$ such that 
	\begin{equation}\label{eq:StrCon10}
		\HM^m(E_k\cap B(x,r))\geq r^m\HM^m(P_{T_x}\circ
		\scale{x,r} ( E_k\cap B(x,r)))\geq (1-\varepsilon)\omega_m r^{m}
	\end{equation}
	for any $0<r<r_{x,2}$ and $k\geq k_x$. We put $r_x=\min\{r_{x,1},r_{x,2}\}$,
	then 
	\[
		\HM^m(E_k\cap B(x,r))\geq (1-\varepsilon)(1+\varepsilon)^{-1}\HM^m(E\cap
		B(x,r)), \text{ for any } 0<r<r_x \text{ and }k\geq k_x.
	\]
	We see that $\mathscr{B}=\{B(x,r)\subseteq \mathcal{O}:x\in E^1\cap \mathcal{O},0<r<r_x\}$ is a 
	Vitali covering of $E^1\cap \mathcal{O}$, thus there exists a countable may balls
	$\{B_i\}_{i\in I}\subseteq \mathscr{B}$, such that $B_i\cap B_j=\emptyset$
	for $i,j\in I$, $i\neq j$, and 
	\[
		\HM^m \left(E^1\cap \mathcal{O}\setminus \bigcup_{i\in I}B_i\right)=0.
	\]
	We take $N>0$ such that $\HM^m \left(E^1\cap \mathcal{O}\setminus \cup_{i>
	N}B_i\right)<\varepsilon$. Assume that $B_i=B(x_i,r_i)$, $i\in I$. Then we
	have that, for any $k\geq \max\{k_{x_i}:1\leq i\leq N\}$, 
	\[
		\begin{aligned}
			\HM^m(E\cap \mathcal{O})&=\HM^m(E^1\cap \mathcal{O})\leq
			\sum_{i=1}^{N}\HM^m(E\cap B_i)+\varepsilon\\
			&\leq\frac{1+\varepsilon}{1-\varepsilon} \sum_{i=1}^{N}\HM^m(E_k\cap
			B_i)+\varepsilon\\
			&\leq \frac{1+\varepsilon}{1-\varepsilon} \HM^m(E_k\cap 
			\mathcal{O})+\varepsilon,
		\end{aligned}
	\]
	thus
	\[
		\HM^m(E\cap \mathcal{O}) \leq \frac{1+\varepsilon}{1-\varepsilon}
		\varliminf_{k\to \infty}\HM^m(E_k\cap \mathcal{O})+\varepsilon,
	\]
	we let $\varepsilon$ tend to $0$ to get that \eqref{eq:lsc} holds.

	Next, we show that, for any subsequence of $\{\var(E_k)\}$, if it converge
	to some varifold $V$, then 
	\[
		\Theta_{\ast}^{m}(\|V\|,x)\geq 1 \text{ for }\HM^m \text{-a.e.}x\in E,
	\]
	and so that $\Theta(\|V\|,x)=1$ for $\HM^m$-a.e. $x\in E$ and $\|V\|(U\setminus
	E)=0$. Indeed, we assume that $\var(E_{k_{\ell}})\to V$. Then for any 
	$x\in E^1$, and any ball $B(x,r)\subseteq U$, we have that 
	\begin{equation}\label{eq:StrCon30}
		\|V\|(\overline{B(x,r)})\geq \varlimsup_{\ell\to
		\infty}\HM^m\left(E_{k_{\ell}}\cap
		\overline{B(x,r)} \right)\geq \HM^m(E\cap B(x,r)),
	\end{equation}
	thus
	\[
		\Theta_{\ast}^m(\|V\|,x)\geq \Theta^m(E,x)\geq 1.
	\]
	But $\HM^m(E)=\lim_{k\to \infty}\HM^m(E_k)=\|V\|(U)$, we have so that
	$\Theta(\|V\|,x)=1$ for $\HM^m$-a.e. $x\in E$ and $\|V\|(U\setminus E)=0$.

	Finally, we show that $\VarTan(V)=\{\var(\Tan(E,x))\}$ for $\HM^m$-a.e. $x\in E$.
	We will denote by $E^2$ the points $x$ in $E_1$ that $\Theta^m(\|V\|,x)=1$
	and $E$ has unique tangent $m$-plane at $x$. Then we see that
	$\HM^m(E\setminus E^2)=0$. For any $x\in E^2$, we have that 
	\[
		1\leq \lim_{r\to 0+}\varliminf_{\ell\to \infty}\frac{\HM^m(T_{\natural}\circ
		\scale{x,r}(E_{k_{\ell}}\cap B(x,r)))}{\omega_m }\leq \lim_{r\to
		0+}\varlimsup_{\ell\to \infty}\frac{\HM^m(E_{k_{\ell}}\cap B(x,r))}{\omega_m
		r^m}\leq \Theta^m(\|V\|,x),
	\]
	but $\Theta^m(\|V\|,x)=1$, we get so that 
	\begin{equation}\label{eq:StrCon40}
		\lim_{r\to 0+}\lim_{\ell\to \infty}\HM^m(T_{\natural}\circ
		\scale{x,r}(E_{k_{\ell}}\cap B(x,r)))= \lim_{r\to
		0+}\lim_{\ell\to \infty}\HM^m(\scale{x,r}(E_{k_{\ell}}\cap B(x,r)))=\omega_m .
	\end{equation}

	We put $E_{k_{\ell},x,r}=\scale{x,r}(E_{k_{\ell}}\cap B(x,r))$, $T=\Tan(E,x)$ and
	$Q_{\ell}(y)=\Tan(E_{k_{\ell},x,r},y)$ for $y\in E_{k_{\ell},x,r}$.
	Employing \cite[8.9 (3)]{Allard:1972}, we have that 
	\[
		\|Q_{\ell}(y)-T\|=\|T^{\perp}\circ Q_{\ell}(y)\|=\sup_{v\in
		T,|v|=1}|T^{\perp}(v)|,
	\]
	so that we can find $v_1\in T$, $v=1$ such that
	$\|Q_{\ell}(y)-T\|=|T^{\perp}(v_1)|$. Let $v_1,v_2,\ldots,v_m$ be a unit
	orthogonal basis of $Q_{\ell}(y)$. Let $\Phi_{\ell}:E_{k_{\ell},x,r}\to T$ be
	defined by $\Phi_{\ell}(y)=T_{\natural}(y)$. Then 
	\[
		\ap J_m\Phi_{\ell}(y)=\|T_{\natural}(v_1)\wedge
		T_{\natural}(v_2)\wedge\cdots\wedge T_{\natural}(v_m)\|\leq |T_{\natural}(v_1)|,
	\]
	thus 
	\[
		\ap J_m\Phi_{\ell}(y)^2\leq 1- \|Q_{\ell}(y)-T\|^2.
	\]
	We get that 
	\begin{equation}\label{eq:StrCon50}
		\|Q_{\ell}(y)-T\|^2\leq 1-\ap J_m\Phi_{\ell}(y)^2\leq 2(1-\ap
		J_m\Phi_{\ell}(y)),
	\end{equation}
	and by the H\"older's inequality and Theorem 3.2.22 in \cite{Federer:1969}, we
	have that
	\[
		\begin{aligned}
			\int_{E_{k_{\ell},x,r}}\|Q_{\ell}(y)-T\|d \HM^m(y)&\leq 2\HM^m(E_{k_{\ell},x,r})
			\int_{E_{k_{\ell},x,r}}(1-\ap J_m\Phi_{\ell}(y)) \\
			&\leq 2\HM^m(E_{k_{\ell},x,r})
			\left(\HM^m(E_{k_{\ell},x,r})-\HM^m(P_T(E_{k_{\ell},x,r}))\right),
		\end{aligned}
	\]
	combine this with \eqref{eq:StrCon40}, we get that 
	\begin{equation}\label{eq:StrCon60}
		\lim_{r\to 0+}\lim_{\ell\to \infty}\int_{E_{k_{\ell},x,r}}\|Q_{\ell}(y)-T\|\ud
		\HM^m(y) =0.
	\end{equation}

	For any $C\in \VarTan(V,x)$, we assume that 
	\[
		C=\lim_{j\to \infty}(\scale{x,r_j})_{\#}V,
	\]
	where $\{r_j\}$ is a decreasing sequence which tend to 0. Then we get that
	\[
		C=\lim_{j\to \infty}\lim_{k\to \infty}\var(\scale{x,r_j}(E_k)),
	\]
	$C$ support on $T$, and 
	\begin{equation}\label{eq:StrCon70}
		C\mr B(0,1)\times G(n,m)=\lim_{j\to \infty}\lim_{k\to
		\infty}\var(F_{k,x,r_j}).
	\end{equation}
	
	For any $\varphi\in C_{c}^{\infty}(\mathbf{R}^n\times G(n,m),\mathbf{R})$
	supported on $B(0,1)\times G(n,m)$, by \eqref{eq:StrCon70}, we have that 
	\begin{equation}\label{eq:StrCon80}
		C(\varphi)=\lim_{j\to\infty}\lim_{\ell\to\infty}
		\int_{E_{k_{\ell},x,r_{j}}}\varphi(y,Q_{\ell}(y))\ud\HM^m(y),
	\end{equation}
	we let $\psi\in C_{c}^{\infty}(\mathbf{R}^n,\mathbf{R})$ be defined by
	$\psi(x)=\varphi(x,T)$, then again by \eqref{eq:StrCon70}, we have that 
	\begin{equation}\label{eq:StrCon90}
		\lim_{j\to\infty}\lim_{\ell\to\infty}\int_{E_{k_{\ell},x,r_{j}}}
		\psi(x)\ud\HM^m(y)=\int_{T}\psi(y)\ud\HM^m(y)=\var(T)(\varphi).
	\end{equation}
	We get, from \eqref{eq:StrCon60}, so that 
	\begin{equation}\label{eq:density112}
		\begin{aligned}
			\left|C(\varphi)-\var(T)(\varphi)\right|&\leq
		\lim_{j\to\infty}\lim_{\ell\to\infty}\int_{E_{k_{\ell},x,r_{j}}}
		|\varphi(y,Q_{\ell}(y))-\varphi(y,T)|\ud\HM^m(y)\\
		&\leq \|D\varphi\|_{\infty}\varlimsup_{j\to\infty}\varlimsup_{i\to\infty}
		\int_{E_{k_{\ell},x,r_{j}}}\|Q_{\ell}(y)-T\|\ud\HM^m(y)=0.
		\end{aligned}
	\end{equation}
Thus 
\[
	C\mr B(0,1)\times G(n,m) =\var(T)\mr B(0,1)\times G(n,m),
\]
but both $C$ and $\var(T)$ are cones, we have that $C=\var(T)$.
\end{proof}
Let $E\subseteq U$ be given, and let $B$ be an open ball such that
$\overline{B}\subseteq U$. A family of mappings $\{\varphi_t\}_{0\leq t 
\leq 1}$ from $E$ to $U$ is called a deformation of $E$ in $B$ if 
\begin{itemize}
	\item $\varphi_0=\id_E$, $\varphi_1$ is Lipschitz, $\varphi_t(x)=x$ for
		$x\in E\setminus B$, and
	\item $[0,1]\times E\to U$ given by $(t,x)\to \varphi_t(x)$ is continuous.
\end{itemize}
By a deformation of $E$ in $U$ we mean a deformation of $E$ in a ball which is
contained in $U$.
\begin{definition}
	For any nondecreasing function $h:[0,+\infty)\to [0,+\infty]$, and number 
	$M\geq 1$, we denote by $QM(U,M,h)$ the collection of relatively closed 
	sets $E\subseteq U$ which satisfy that
	\begin{itemize}
		\item $\HM^{m}\llcorner E$ is locally finite, 
			$\HM(E\cap B(x,r))>0$ for any $x\in E$ and some $r=r(x)>0$,
		\item for any ball $B=B(x,r)$ with $\overline{B}\subseteq U$, and any
			deformation $\{\varphi_t\}_{0\leq t\leq 1}$ of $E$ in $B$, by
			setting $W_t=\{y\in U:\varphi_t(y)\neq y\}$, we have that
			\[
				\HM^m(E\cap W_1)\leq M\HM^m(\varphi_1(E\cap W_1))+h(r)r^m.
			\]
	\end{itemize}
\end{definition}
 It is quite easy to see from the definition that, if $M_1\leq M_2$ and 
 $h_1\leq h_2$, then 
\[
	QM(U,M_1,h_1)\subseteq QM(U,M_2,h_2).
\]
If the function $h$ satisfies that $h(t)=0$ for $t<\delta$, and
$h(t)=+\infty$ for $t\geq \delta$, where $\delta>0$, then the sets in $QM(U,M,h)$
are usual $(U,M,\delta)$-quasiminimal sets, see for example Definition 2.4 in
\cite{David:2003}, and also Definition 1.9 in \cite{DS:2000}, but it is called 
$(U,M,\delta)$-quasiminimizer. If $h$ satisfies that $h(t)=h\in [0,1)$ is a 
constant for $t<\delta$, and $h(t)=+\infty$ for $t\geq \delta$, where 
$\delta>0$, then $QM(U,M,h)$ will be the general Almgren quasiminimal sets 
$GAQ(M,\delta,U,h)$ defined in Definition 2.10 in \cite{David:2009}. A function
$h:[0,\infty)\to [0,\infty]$ is called a gauge function if $h$ is a nondecreasion
function with $h(0+)=0$. Note that if $h$ is a gauge function, then $QM(U,1,h)$ 
will be the usual almost minimal sets, see for example Definition 4.3 in 
\cite{David:2009}. We see from Lemma 2.15 in \cite{David:2009} that every set 
in $QM(U,M,h)$ is local Ahlfors regular in case $h(0+)$ small enough, namely 
that \eqref{eq:AR} holds, and the constant $C$ only depends on $n$ and $m$.  
\begin{lemma}\label{le:lscHM}
	Let $\{E_k\}\subseteq QM(U,M_k,h_k)$ be a sequence. Suppose that $E_k$
	converge to some set $E$ in $U$ in local Hausdorff distance, $M=\varlimsup_{k\to\infty}M_k<+\infty$, and $h=\varlimsup_{k\to\infty}
	h_k$ satisfying that $h(0+)$ is small enough. Then we have that
\begin{enumerate}
		\item \label{lscHM1} $\HM^m(E\cap \mathcal{O})\leq \varliminf_{k\to\infty}\HM^m(E_k\cap
			\mathcal{O})$ for any open set $\mathcal{O}\subseteq U$;
		\item \label{lscHM2} $E$ is $m$-rectifiable, $E\in QM(U,M,h)$;
		\item \label{lscHM3} $\varlimsup_{k\to \infty}\HM^m(E_k\cap K)\leq (1+Ch(0+))M\HM^m(E\cap
			K)$ for any compact set $K\subseteq U$.
\end{enumerate}
\end{lemma}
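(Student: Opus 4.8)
The plan is to extract from the sequence the one piece of quantitative regularity that drives everything, namely uniform Ahlfors regularity, and then to read off the three assertions in the order: rectifiability of $E$, then (1), then $E\in QM(U,M,h)$, then (3). By the cited Lemma 2.15 of \cite{David:2009}, the hypotheses $\varlimsup_k M_k=M<\infty$ and $h(0+)$ small guarantee that, for all large $k$, each $E_k$ satisfies the Ahlfors bound \eqref{eq:AR} with a constant $C=C(n,m)$ and a radius $r_0$ independent of $k$. Since each $E_k$ is Ahlfors regular and quasiminimal, it is uniformly rectifiable with constants controlled by $n,m,M,h(0+)$ (David--Semmes); the underlying bilateral approximation-by-planes property is a closed, scale-invariant condition that passes to local Hausdorff limits, so $E$ is rectifiable. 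In particular $E$ has an approximate tangent plane for $\HM^m$-a.e.\ $x$, which is exactly what the covering argument for (1) will need.

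The heart of (1) is to verify the projection hypothesis \eqref{eq:StrCon} of Lemma \ref{le:convergence} for $\{E_k\}$; once this is in hand, \eqref{eq:lsc}, i.e.\ assertion (1), follows verbatim from the Vitali covering argument already carried out inside the proof of Lemma \ref{le:convergence}. Fix a rectifiable point $x\in E$ with tangent plane $T=\Tan(E,x)$ and put $E_{k,x,r}=\scale{x,r}(E_k\cap B(x,r))$. As $k\to\infty$ and then $r\to0$, the rescaled sets $E_{k,x,r}$ converge in Hausdorff distance to $T\cap B(0,1)$, so the shadow $T_{\natural}(E_{k,x,r})$ is Hausdorff-close to the full disk $T\cap B(0,1)$. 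Hausdorff closeness alone does not control the measure of the shadow, so the key step is a topological degree argument: the boundary trace $E_{k,x,r}\cap\partial B(0,1)$ wraps the round sphere $T\cap\partial B(0,1)$, and the non-degeneracy of quasiminimal sets (no interior holes, a consequence of the lower bound in \eqref{eq:AR}) forbids the vertical lines over an interior disk from missing $E_{k,x,r}$; hence $T_{\natural}$ maps $E_{k,x,r}$ onto $T\cap B(0,1-\delta)$ with $\delta\to0$, giving $\HM^m(T_{\natural}(E_{k,x,r}))\ge(1-\delta)^m\omega_m$ and thus \eqref{eq:StrCon}.

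To prove $E\in QM(U,M,h)$ I would transplant competitors. Given a ball $B=B(x,r)$ with $\overline{B}\subseteq U$ and a deformation $\{\varphi_t\}$ of $E$ in $B$, the task is to build deformations $\{\varphi^k_t\}$ of $E_k$ in a slightly larger ball that approximate $\{\varphi_t\}$, equal the identity off the moved region, and satisfy $\HM^m(\varphi^k_1(E_k\cap W^k_1))\le\HM^m(\varphi_1(E\cap W_1))+o(1)$; feeding these into the quasiminimality inequality for $E_k$, using (1) on the left to get $\varliminf_k\HM^m(E_k\cap W^k_1)\ge\HM^m(E\cap W_1)$, and letting $k\to\infty$ yields the inequality for $E$ with constants $M,h$. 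The Ahlfors upper bound for $E$ is then a special case via Lemma 2.15 applied to $E$. For (3) I would run the same machinery in the opposite direction: cover a neighborhood of the compact $K$ by a bounded-overlap family of balls $B_i=B(x_i,r_i)$ centered on $E$ with $r_i$ small, and on each $B_i$ deform $E_k$ onto $E$ so that $\HM^m(\varphi^k_1(E_k\cap B_i))\le(1+\eta)\HM^m(E\cap 2B_i)$; quasiminimality gives $\HM^m(E_k\cap B_i)\le M_k(1+\eta)\HM^m(E\cap 2B_i)+h_k(r_i)r_i^m$, and summing, using bounded overlap, and bounding $\sum_i h_k(r_i)r_i^m\le Ch(0+)\HM^m(E\cap K)$ through the Ahlfors regularity of $E$ produces, after $k\to\infty$ and $\eta\to0$, the estimate $\varlimsup_k\HM^m(E_k\cap K)\le(1+Ch(0+))M\,\HM^m(E\cap K)$.

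The main obstacle is the construction of the competitor deformations in the last two steps, together with the projection surjectivity in the middle step: in each case one must move a merely Ahlfors-regular rectifiable set onto another such set (or onto a plane) with quantitative control on both the Lipschitz constant and the image measure, which a naive nearest-point projection cannot deliver. I expect to handle this by a Federer--Fleming type dyadic projection adapted to $E$ --- restricting $E$ and $E_k$ to a fine polyhedral grid, projecting within each skeleton, and using the Hausdorff closeness of $E_k$ to $E$ to glue the local maps --- and to control the resulting measures through the bounded overlap of the grid cells and the uniform Ahlfors regularity.
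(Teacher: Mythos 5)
Your proposal inverts the logical order that makes this lemma provable, and the inversion rests on a step that does not hold up. Your chain is: rectifiability of $E$, then (1), then $E\in QM(U,M,h)$, then (3), where your proof of (1) needs tangent planes of $E$ almost everywhere (hence rectifiability), and your proof of (2) feeds (1) into the transplanted competitors. But your argument for the base of this chain --- that uniform rectifiability of the $E_k$ ``passes to local Hausdorff limits'' because bilateral approximation by planes is ``a closed, scale-invariant condition'' --- is not a proof. The David--Semmes characterizations of uniform rectifiability (bilateral weak geometric lemma, big pieces of Lipschitz images, etc.) are Carleson-packing conditions on the family of bad pairs $(x,r)$, not pointwise, all-scales conditions; while flatness at a \emph{fixed} pair $(x,r)$ does pass to Hausdorff limits, the Carleson packing of the bad pairs does not, unless one already controls the measures $\HM^m\llcorner E_k$ along the sequence --- and the failure of Hausdorff convergence to give any such measure-theoretic convergence is exactly the phenomenon this paper is about. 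The paper (following \cite{David:2009}) runs the implications in the opposite direction: part (1) is Lemma 3.3 of \cite{David:2009}, whose proof does not presuppose anything about the limit set; $E\in QM(U,M,h)$ is Lemma 4.1 of \cite{David:2009}; and only \emph{then} is $E$ rectifiable, because $E$, being itself a generalized quasiminimal set, is locally uniformly rectifiable by adapting Theorem 2.11 of \cite{DS:2000} (see \cite[p.~81]{David:2009}). With your ordering, every assertion sits downstream of the one step that fails.

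The second gap is that the parts you yourself flag as ``the main obstacle'' are not obstacles to be smoothed over but the entire content of the result. Building, from a deformation $\{\varphi_t\}$ of $E$ in a ball, deformations of $E_k$ with image measure controlled by $\HM^m(\varphi_1(E\cap W_1))+o(1)$, and conversely deformations pushing $E_k$ onto $E$ with measure control for (3), is precisely what Lemmas 4.1 and 3.12 of \cite{David:2009} accomplish (building on \cite{David:2003}); their proofs occupy many pages of delicate dyadic/Federer--Fleming constructions (making the transplanted map a genuine deformation in a ball, keeping it Lipschitz, handling the part of $E_k$ far from $E$, bounded overlap, etc.). The paper's own proof of this lemma consists of citing exactly these results; your proposal, read as self-contained, leaves the hardest part as a sketch, and read as a citation-level argument, cites nothing. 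One smaller but genuine error: the surjectivity of $T_{\natural}$ onto $T\cap B(0,1-\delta)$ is not ``a consequence of the lower bound in \eqref{eq:AR}'' nor of degree theory applied to $E_{k,x,r}$ --- Ahlfors regular sets can perfectly well project with holes. What forbids holes is quasiminimality: as in the proof of Theorem \ref{thm:QM}, one composes a truncated projection with a radial push away from the purported hole, bounds the image measure by the measure of $E_k$ in a thin annulus, and contradicts the Ahlfors lower bound; and that annulus term must itself be controlled, either by part (3) (as the paper does) or by the uniform upper Ahlfors bound for $E_k$ combined with closeness to the tangent plane --- a point your sketch does not address.
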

\begin{proof}
Indeed, \eqref{lscHM1} follows from Lemma 3.3 in \cite{David:2009}. The fact
$E\in QM(U,M,h)$ follows from Lemma 4.1 in in \cite{David:2009}; and the 
rectifiability of $E$ comes from the local uniform rectifiability of $E$,
which can be proved by adapting the proof of the local uniform rectifiability of
quasiminimal sets (Theorem 2.11 in \cite{DS:2000}) to generalized quasiminimal
sets, see \cite[p.81]{David:2009}

It follows from Lemma 3.12 in \cite{David:2009} that 
\[
	\varlimsup_{k\to \infty}\HM^m(E_k\cap K)\leq (1+Ch(t))M\HM^m(E\cap
	K), \text{ for any compact set }K\subseteq U,
\]
for $t$ small enough which makes $h(t)$ small enough, thus we let $t$ tends to
$0$ to get the conculsion \eqref{lscHM3}.

\end{proof}
From above lemma, we see that $QM(U,M,h)$ is comapct under the locally
Hausdorff distance. That is, for any sequence $\{E_k\}\subseteq QM(U,M,h)$,
there is a subsequence $\{E_{k_{\ell}}\}$ which converges in local Hausdorff
distance to some set in $QM(U,M,h)$.
\begin{theorem} \label{thm:QM}
	Let $\{E_k\}$ be a sequence of sets such that $E_k\in QM(U,M_k,h_k)$.
	Suppose that $E_k\to E$ in $U$, $M=\varlimsup_{k\to\infty}M_m <+\infty$,
	$h=\varlimsup_{m\to\infty} h_m$ satisfy that $h(0+)$ is small enough. 
	If $\HM^m(E)=\lim_{k\to\infty}\HM^m(E_k)<\infty$, then $\var(E_k)\wc \var(E)$.
\end{theorem}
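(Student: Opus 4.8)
The plan is to reduce Theorem~\ref{thm:QM} to an application of Lemma~\ref{le:convergence}. The hypotheses of the theorem give me a sequence $\{E_k\}$ of generalized quasiminimal sets converging to $E$ in local Hausdorff distance, with $\HM^m(E)=\lim_k\HM^m(E_k)<\infty$. By Lemma~\ref{le:lscHM}\eqref{lscHM2}, the limit set $E$ is $m$-rectifiable (and lies in $QM(U,M,h)$), so it possesses true tangent planes $\HM^m$-a.e. by the Ahlfors regularity discussed after the definition of $QM$. Each $E_k$ is likewise $m$-rectifiable. Thus all the structural hypotheses of Lemma~\ref{le:convergence} are in place, \emph{except} the key lower-density condition~\eqref{eq:StrCon}. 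The entire content of the proof will therefore be to verify that inequality, namely that for $\HM^m$-a.e.\ $x\in E$, writing $T=\Tan(E,x)$,
\[
	\lim_{r\to 0}\varliminf_{k\to \infty}\HM^m\bigl(T_{\natural}\circ
	\scale{x,r}(E_k\cap B(x,r))\bigr)\geq \omega_m.
\]

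First I would fix a point $x\in E$ at which $E$ has a true tangent $m$-plane $T$ and density $1$; call $\Pi$ the affine plane $x+T$. Because $\Tan(E,x)=T$ in the strong sense, for every $\varepsilon>0$ there is $r_\varepsilon>0$ so that $E\cap B(x,r)\subseteq \mathcal{C}(x,T,r,\varepsilon)$ for $r<r_\varepsilon$, i.e.\ $E$ lies in a thin cone around $\Pi$ near $x$. The heart of the matter is to transfer this to the approximants: I would argue that since $E_k\to E$ in local Hausdorff distance, for large $k$ the set $E_k\cap B(x,r)$ also lies within a slightly fatter cone $\mathcal{C}(x,T,r,2\varepsilon)$, and more importantly that $E_k\cap B(x,r)$ "fills out" the disc $\Pi\cap B(x,r)$ after projection. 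Concretely, the goal is to show that $T_{\natural}\circ\scale{x,r}(E_k\cap B(x,r))$ covers, up to an $\varepsilon$-error, the unit disc $T\cap B(0,1)$, whence its $\HM^m$-measure is at least $(1-\varepsilon)\omega_m$.

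The mechanism I would use to prove this covering is a \emph{topological / deformation} argument exploiting quasiminimality. If the projection $T_{\natural}(E_k\cap B(x,r))$ missed a substantial portion of the disc, then one could project $E_k$ onto $\Pi$ and compose with a retraction pushing the image off the missing hole, producing a competitor $\varphi_1(E_k\cap W_1)$ of strictly smaller measure while the quasiminimality inequality $\HM^m(E_k\cap W_1)\leq M_k\HM^m(\varphi_1(E_k\cap W_1))+h_k(r)r^m$ forces $E_k$ not to lose too much mass. Since $E_k$ is Ahlfors regular with mass comparable to $\omega_m r^m$ in $B(x,r)$ and $h_k(0+)$ is small, the deficit in the projected image is controlled; letting first $k\to\infty$, then $r\to 0$, then $\varepsilon\to0$ yields the required lower bound $\omega_m$. \textbf{The main obstacle} is precisely this covering step: making rigorous the claim that local Hausdorff closeness plus quasiminimality forces the projection to be nearly surjective onto the tangent disc. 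The delicate points are the uniform control of the gauge and quasiminimality constants ($M=\varlimsup M_k$, $h=\varlimsup h_k$) when passing to the limit in $k$ before sending $r\to0$, and ensuring the competitor built by projection-and-retraction is an admissible deformation supported in $B(x,r)$. Once \eqref{eq:StrCon} is established, Lemma~\ref{le:convergence} delivers $\var(E_k)\wc\var(E)$ immediately, completing the proof.
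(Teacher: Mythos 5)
Your proposal is correct and follows essentially the same route as the paper: reduce to Lemma~\ref{le:convergence} via the rectifiability from Lemma~\ref{le:lscHM}, then verify the density condition~\eqref{eq:StrCon} by showing the projection onto the tangent plane nearly covers the unit disc, arguing by contradiction with a projection-plus-retraction competitor (pushing through a missed hole) whose admissibility and smallness contradict quasiminimality, Ahlfors regularity, and the smallness of $h(0+)$. The paper implements exactly this, using a cutoff-interpolated projection $T_{\natural}^{\varepsilon}$ (Lipschitz constant $\leq 4$, equal to the identity outside the unit ball) to make the competitor an admissible deformation, and then sends $k\to\infty$, $r\to 0$, $\varepsilon\to 0$ in the same order you propose.
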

\begin{proof}
	By Lemma \ref{le:lscHM}, we have that $E$ is rectifiable. Thus for
	$\HM^m$-a.e. $x\in E$, $\Theta^m(E,x)=1$ and $E$ has a tangent plane at $x$, 
	denote it by $T_x$.
	Since $E_k\to E$ in $U$, and $T_x=\Tan(E,x)$, we get that for any
	$\varepsilon>0$, there exist $0<r_{\varepsilon}<\dist(x,U^c)$ and
	$k_{\varepsilon}>0$ such that for any $0<r<r_{\varepsilon}$ and 
	$k\geq k_{\varepsilon}$,we have that 
	\begin{equation}\label{eq:QM20}
		\scale{x,r}(E_k\cap B(x,r)) \subseteq T_x+B(0,\varepsilon) 
	\end{equation}
	and 
	\begin{equation}\label{eq:QM21}
		(1-\varepsilon)\omega_m r^m\leq \HM^m(E\cap B(x,r))\leq (1+\varepsilon)
		\omega_m r^m.
	\end{equation}
	Since $\HM^m(E\cap \partial B(x,r))=0$ for $\HM^1$-a.e. $r>0$, we always put
	ourself in the case for such $r$.
	We put $T=T_x$, $E_{k,x,r}=\scale{x,r}(E_k\cap B(x,r))$ and define $h_{k,r}$ 
	by given $h_{k,r}(t)=h_k(rt)$. Then we have that 
	\begin{equation}\label{eq:QM30}
		E_{k,x,r}\in QM\left(B(0,1),M_k,h_{k,r}\right).
	\end{equation}

	For any $0<\varepsilon< 1$, we let $g:\mathbb{R}\to
	\mathbb{R}$ be a function of class $C^{\infty}$ such that $0\leq g\leq 1$,
	$g(t)=1$ for $t\leq \varepsilon$, $g(t)=0$ for $t\geq 1$, and $\|Dg\|\leq
	2/\varepsilon$. We define
	mapping $T_{\natural}^{\varepsilon}:\mathbb{R}^n\to \mathbb{R}^n$ by
	\[
		T_{\natural}^{\varepsilon}(x)=(1-g(|x|))x+g(|x|)T_{\natural}(x).
	\]
	Then, by setting $T^{\varepsilon}=[T+B(0,\varepsilon)]\cap B(0,1)$, we have
	that
	\begin{equation}\label{eq:QM40}
		\Lip \left(T_{\natural}^{\varepsilon}\vert_{T^{\varepsilon}}\right)\leq
		\|DT_{\natural}^{\varepsilon}\vert_{T^{\varepsilon}}\|
		\leq 2+\frac{2 \varepsilon}{\varepsilon} =4.
	\end{equation}

	We claim that 
	\begin{equation}\label{eq:QM50}
		T_{\natural}^{\varepsilon}(E_{k,x,r})\supseteq T\cap B(0,1-2 \varepsilon).
	\end{equation}
We proceed by contradiction for the claim. Assume $y\in T\cap B(0,1-2
\varepsilon)\setminus T_{\natural}^{\varepsilon}(E_{k,x,r})$. Then there is a
small ball $B(y,\rho)$ such that $T_{\natural}^{\varepsilon}(E_{k,x,r})\cap
B(y,\rho)=\emptyset$. Let $\Psi:\mathbb{R}^n\to \mathbb{R}^n $ be a mapping of
class $C^{\infty}$ such that $\Psi(z)\in \partial B(0,1-2 \varepsilon)$ for
$z\in B(0,1-2 \varepsilon)\setminus B(y,\rho)$, and $\Psi(z)=z$ for $z\not\in
B(0,1-2 \varepsilon)$. Then, by setting $A_{x,r,\varepsilon}=E_k\cap
\overline{B(x,r)}\setminus B(x,(1-2 \varepsilon)r)$, we have that 
\[
	\begin{aligned}
		\HM^m \left(\Psi\circ T_{\natural}^{\varepsilon}(E_{k,x,r})\right)&=
		\HM^m \left(T_{\natural}^{\varepsilon}(E_{k,x,r}\setminus B(0,1-2
		\varepsilon))\right)\leq
		\Lip(T_{\natural}^{\varepsilon}\vert_{T^{\varepsilon}} )^m 
		\HM^m \left(E_{k,x,r}\setminus B(0,1-2 \varepsilon)\right)\\
		&\leq 4^m r^{-m} \HM^m(E_k\cap
		A_{x,r,\varepsilon}),
	\end{aligned}
\]
thus
\[
	\begin{aligned}
		\varlimsup_{k\to \infty}\HM^m \left(\Psi\circ
		T_{\natural}^{\varepsilon}(E_{k,x,r})\right)&\leq 4^m r^{-m} \varlimsup_{k\to \infty}\HM^m(E_k\cap
		A_{x,r,\varepsilon})\\
		&\leq4^m
		r^{-m}(1+Ch(0+))M\HM^m(E\cap A_{x,r,\varepsilon}).
	\end{aligned}
\]
Hence 
\[
	\begin{aligned}
		\HM^m(E\cap B(x,r))&\leq \varliminf_{k\to \infty}\HM^m(E_k\cap B(x,r))=
		r^{m}\varliminf_{k\to \infty}\HM^m(E_{k,x,r})\\
		&\leq r^{m}\varliminf_{k\to \infty } \left(M\HM^m \left(\Psi\circ
		T_{\natural}^{\varepsilon}(E_{k,x,r})\right)+h_{k,r}(1)\right)\\
		&\leq  4^m
		(1+Ch(0+))M^2\HM^m(E\cap A_{x,r,\varepsilon})+ h(0+)r^m.
	\end{aligned}
\]
But $\HM^m(E\cap B(x,r))\geq (1-\varepsilon)\omega_mr^m$ and 
\[
	\HM^m(E\cap A_{x,r,\varepsilon})\leq (1+\varepsilon)\omega_m r^m-
	(1-\varepsilon)\omega_m (1-2 \varepsilon)^mr^m\leq 2(m+1) \varepsilon r^m,
\]
we get so that 
\[
	(1- \varepsilon)\omega_m\leq 2(m+1)\cdot 4^m
		(1+Ch(0+))M^2 \varepsilon+ h(0+),
\]
but this is a contradiction when $h(0+)$ is small enough and $\varepsilon$ tends
to $0$, and we proved the claim.

By \eqref{eq:QM50}, we have that 
\[
	\begin{aligned}
		\HM^m(T_{\natural}\circ \scale{x,r}(E_k\cap B(x,r)))&\geq
		\HM^m(T_{\natural}(E_{k,x,r}\cap B(0,1-\varepsilon)))
		=\HM^m(T_{\natural}^{\varepsilon}(E_{k,x,r}\cap B(0,1-\varepsilon)))\\
		&\geq \HM^m(T_{\natural}^{\varepsilon}(E_{k,x,r}))-
		\HM^m(T_{\natural}^{\varepsilon}(E_{k,x,r}\setminus B(0,1-\varepsilon)))\\
		&\geq \HM^m(T\cap B(0,1-2 \varepsilon))-4^m 
		r^{-m} \HM^m(E_k\cap A_{x,r,\varepsilon}),
	\end{aligned}
\]
thus 
\[
	\begin{aligned}
		\varliminf_{k\to \infty}\HM^m(T_{\natural}\circ \scale{x,r}(E_k\cap
		B(x,r))) & \geq (1-2 \varepsilon)\omega_m-4^m 
		r^{-m}(1+Ch(0+))M\HM^m(E\cap A_{x,r,\varepsilon})\\
		&\geq (1-2 \varepsilon)\omega_m-4^m (1+Ch(0+))M
		\cdot 2(m+1) \varepsilon,
	\end{aligned}
\]
and 
\[
	\lim_{r\to 0}\varliminf_{k\to \infty}\HM^m(T_{\natural}\circ
	\scale{x,r}(E_k\cap B(x,r)))\geq (1-2 \varepsilon)\omega_m-
	4^m (1+Ch(0+))M \cdot 2(m+1) \varepsilon,
\]
we let $\varepsilon$ tend to $0$ to get that 
\[
	\lim_{r\to 0}\varliminf_{k\to \infty}\HM^m(T_{\natural}\circ
	\scale{x,r}(E_k\cap B(x,r)))\geq \omega_m.
\]
Applying Lemma \ref{le:convergence}, we get the conclusion $\var(E_k)\wc
\var(E)$.
\end{proof}
\begin{corollary}
	Let $\{E_k\}$ be a sequence of sets such that $E_k\in QM(U,M_k,h_k)$.
	Suppose that $E_k\to E$, $M=\varlimsup_{k\to\infty}M_m =1$,
	$h=\varlimsup_{m\to\infty} h_m$ satisfy that $h(0+)=0$.
	Then we have that $\var(E_k)\wc \var(E)$. In particular, for any gauge
	function $h$, the mapping $QM(U,1,h)\to \Var_m(U)$ given by $E\mapsto
	\var(E)$ is a homeomorphism between its domain and image, where $QM(U,1,h)$
	is equipped with the topology deduced by the local Hausdorff distance and
	$\Var_m(U)$ is equipped with the weak topology.
\end{corollary}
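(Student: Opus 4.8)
The plan is to reduce everything to Theorem \ref{thm:QM} and Lemma \ref{le:lscHM}, the one new ingredient being that the sharp hypotheses $M=1$ and $h(0+)=0$ promote the one-sided semicontinuity estimates into genuine \emph{local} mass convergence. For the first assertion, Lemma \ref{le:lscHM}\eqref{lscHM2} already tells us that $E$ is $m$-rectifiable and $E\in QM(U,1,h)$, so $\HM^m\mr E$ is locally finite. Fix $\varphi\in C_c(U\times\grass{n}{m})$ with support in $K\times\grass{n}{m}$, $K\subseteq U$ compact, and pick a relatively compact open set $\mathcal{O}$ with $K\subseteq\mathcal{O}$, $\overline{\mathcal{O}}\subseteq U$ and $\HM^m(E\cap\partial\mathcal{O})=0$; such an $\mathcal{O}$ exists because the level sets of $x\mapsto\dist(x,K)$ are disjoint and $\HM^m\mr E$ is locally finite, so only countably many of them carry mass. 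On $\mathcal{O}$ the lower bound \eqref{lscHM1} and the upper bound \eqref{lscHM3} (with $M=1$, $h(0+)=0$) pinch the masses,
\[
\HM^m(E\cap\mathcal{O})\leq\varliminf_{k\to\infty}\HM^m(E_k\cap\mathcal{O})\leq\varlimsup_{k\to\infty}\HM^m(E_k\cap\overline{\mathcal{O}})\leq\HM^m(E\cap\overline{\mathcal{O}})=\HM^m(E\cap\mathcal{O}),
\]
whence $\lim_{k\to\infty}\HM^m(E_k\cap\mathcal{O})=\HM^m(E\cap\mathcal{O})<\infty$.

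Now $E_k\cap\mathcal{O}\in QM(\mathcal{O},M_k,h_k)$ (the restriction of a quasiminimal set to an open subset is again quasiminimal, with the same constants) and $E_k\cap\mathcal{O}\to E\cap\mathcal{O}$ in local Hausdorff distance in $\mathcal{O}$, so Theorem \ref{thm:QM}, applied with $U$ replaced by $\mathcal{O}$, gives $\var(E_k\cap\mathcal{O})\wc\var(E\cap\mathcal{O})$. Since $\varphi$ is supported in $\mathcal{O}\times\grass{n}{m}$ we have $\int\varphi\ud\var(E_k)=\int\varphi\ud\var(E_k\cap\mathcal{O})$ and similarly for $E$, hence $\int\varphi\ud\var(E_k)\to\int\varphi\ud\var(E)$. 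As $\varphi$ was arbitrary, $\var(E_k)\wc\var(E)$.

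For the homeomorphism statement, recall that each $E\in QM(U,1,h)$ is $m$-rectifiable and locally Ahlfors regular (Lemma 2.15 in \cite{David:2009}), so $\var(E)$ is the multiplicity-one rectifiable varifold carried by $E$ and $\|\var(E)\|=\HM^m\mr E$. The assignment $\Phi\colon E\mapsto\var(E)$ is injective: $\var(E)=\var(F)$ forces $\HM^m\mr E=\HM^m\mr F$, and Ahlfors regularity together with relative closedness gives $E=\spt(\HM^m\mr E)\cap U=\spt(\HM^m\mr F)\cap U=F$. Continuity of $\Phi$ is exactly the first part of the corollary applied with $M_k\equiv1$ and $h_k\equiv h$.

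The main obstacle is the continuity of $\Phi^{-1}$, which I would obtain by a subsequence argument resting on the compactness of $QM(U,1,h)$ recorded after Lemma \ref{le:lscHM}. Assume $\var(E_k)\wc\var(E)$ with all sets in $QM(U,1,h)$. Given any subsequence, compactness extracts a further subsequence $\{E_{k_\ell}\}$ converging in local Hausdorff distance to some $F\in QM(U,1,h)$; continuity of $\Phi$ gives $\var(E_{k_\ell})\wc\var(F)$, while $\var(E_{k_\ell})\wc\var(E)$ by hypothesis, so uniqueness of weak limits yields $\var(F)=\var(E)$ and then $F=E$ by injectivity. Hence every subsequence of $\{E_k\}$ has a further subsequence converging to $E$; since the local Hausdorff distance is metrizable (say by $\sum_i 2^{-i}\min\{1,d_{x_i,r_i}(\cdot,\cdot)\}$ over a countable dense set of pairs $(x_i,r_i)$) and limits of closed sets are unique, this forces $E_k\to E$. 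Combined with injectivity and the two-sided continuity, this shows $\Phi$ is a homeomorphism onto its image. I expect the genuine work to sit in this inverse-continuity step—its engine being the compactness of the class plus injectivity via Ahlfors regularity—while the forward direction is soft once the mass pinching of the first paragraph makes Theorem \ref{thm:QM} applicable.
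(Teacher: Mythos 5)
Your proof of the varifold--convergence assertion is correct and essentially the paper's own argument: both exploit $M=1$, $h(0+)=0$ to pinch the masses, bounding $\varliminf_k\HM^m(E_k\cap\mathcal{O})$ from below by Lemma \ref{le:lscHM}~\eqref{lscHM1} and $\varlimsup_k\HM^m(E_k\cap\overline{\mathcal{O}})$ from above by Lemma \ref{le:lscHM}~\eqref{lscHM3} on regions whose boundary carries no $\HM^m\mr E$ mass, and both then feed the resulting local mass convergence into Theorem \ref{thm:QM}; the paper localizes on balls $B(x,r)$ with $\HM^m(E\cap\partial B(x,r))=0$, you on neighborhoods $\mathcal{O}$ of the support of a test function chosen by the same null-level-set trick, and both arguments tacitly use that quasiminimality restricts to open subsets with unchanged constants (which you at least state and justify). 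Where you genuinely go beyond the paper is the ``in particular'' clause: the paper's proof stops after establishing $\var(E_k)\wc\var(E)$ and never addresses injectivity or continuity of the inverse, whereas you supply both --- injectivity from $\|\var(E)\|=\HM^m\mr E$ together with local Ahlfors regularity identifying $E=\spt(\HM^m\mr E)\cap U$, and inverse continuity from the compactness of $QM(U,1,h)$ recorded after Lemma \ref{le:lscHM} via the standard subsequence argument. That is the right (and essentially the only) way to complete the statement. One small point you should make explicit: the subsequence criterion gives continuity of $\Phi^{-1}$ only where sequences detect the topology, so you need the weak topology to be metrizable (or at least first countable) on the image of $\Phi$; this does hold, because local Ahlfors regularity yields uniform bounds $\HM^m(E\cap K)\leq C(K)$ for all $E\in QM(U,1,h)$ and all compact $K\subseteq U$ (cover $K$ by finitely many balls of radius $r_0/2$ centered in $K$ and apply the upper regularity bound in each), so the image lies in a set of varifolds with uniformly bounded mass on compacta, on which the weak topology is metrizable. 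With that remark added, your proof is complete and strictly more detailed than the paper's.
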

\begin{proof}
	By Lemma \ref{le:lscHM}, we have that for any open set $\mathcal{O}$ and
	compact set $K$, 
	\[
		\HM^m(E\cap \mathcal{O})\leq \varliminf_{k\to\infty}\HM^m(E_k\cap \mathcal{O})
	\]
	and 
	\[
		\HM^m(E\cap K)\geq \varlimsup_{k\to\infty}\HM^m(E_k\cap K).
	\]
	If $\mathcal{O}\subseteq U$ is an open set satisfying that
	$\overline{\mathcal{O}}\subseteq U$ and $\HM^m(E\cap \partial
	\mathcal{O})=0$, then we have that 
	\[
		\HM^m(E\cap \overline{\mathcal{O}})\geq
		\varlimsup_{k\to\infty}\HM^m(E_k\cap \overline{\mathcal{O}})\geq
		\varliminf_{k\to\infty}\HM^m(E_k\cap \mathcal{O})\geq \HM^m(E\cap
		\mathcal{O}),
	\]
	thus 
	\[
		\HM^m(E\cap \mathcal{O})= \lim_{k\to\infty}\HM^m(E_k\cap \mathcal{O}).
	\]

	For any $x\in U$, we see that $\HM^m(E\cap \partial B(x,r))=0$ for
	$\HM^1$-a.e. $r>0$, we can find $r>0$ so that $B(x,r)\subseteq U$,
	$\HM^m(E\cap \partial B(x,r))=0$ and $\HM^m(E\cap B(x,r))<+\infty$, thus 
	\[
		\HM^m(E\cap B(x,r))= \lim_{k\to\infty}\HM^m(E_k\cap B(x,r)).
	\]
	By Theorem \ref{thm:QM}, we have that $\var(E_k\cap B(x,r))\wc \var(E\cap
	B(x,r))$. Hence 
	\[
		\var(E_k)\wc \var(E).
	\]

\end{proof}

%\newpage
\section{Convergence of quasiminimal sets involving elliptic integrands}
A function $F:\mathbb{R}^n\times \grass{n}{m}\to (0,\infty)$ is called an
integrand, if additionally  $1\leq \sup F/\inf F<+\infty$, then we say that
$F$ is bounded. For any $x\in \mathbb{R}^n$, we define integrand $F^x$ be
given $F^x(y,T)=F(x,T)$. We define the functional
$\Phi_F:\Var(\mathbb{R}^n)\to \mathbb{R}$ by the formula 
\[
	\Phi_F(V)=\int F(x,T)\ud V(x,T).
\]
An integrand $F$ is called elliptic if there exists a continuous function
$c:\mathbb{R}^n\to (0,\infty)$ such that for any $x\in \mathbb{R}^n$, 
\begin{equation}\label{eq:elliptic}
	\Phi_{F^x}(S)-\Phi_{F^x}(D)\geq c(x)(\HM^m(S)-\HM^m(D))
\end{equation}
whenever $D=T\cap B(0,1)$ for some $T\in \grass{n}{m}$ and $S$ is a compact 
$m$-rectifiable set which can not be mapped
into $T\cap \partial B(0,1)$ by any Lipschitz mapping which leaves $T\cap
\partial B(0,1)$ fixed, see \cite{Almgren:1974}. 
$F$ is called semi-elliptic if it hold  $\Phi_{F^x}(S)-\Phi_{F^x}(D)\geq 0$ 
instead of \eqref{eq:elliptic}.

\begin{lemma}\label{le:lscHMF}
	Let $E_k$, $M_k$, $h_k$, $E$, $M$ and $h$ be the same as in Lemma
	\ref{le:lscHM}, and let $F$ be a semi-elliptic integrand. Then, for any open
	set $\mathcal{O}\subseteq U$, we have that 
	\[
		\Phi_F(E\cap \mathcal{O})\leq \varliminf_{k\to\infty}\Phi_F(E_k\cap \mathcal{O})
	\]
\end{lemma}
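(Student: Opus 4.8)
The plan is to mimic the lower-semicontinuity argument for $\HM^m$ carried out in the proof of Lemma~\ref{le:convergence} (the inequality \eqref{eq:lsc}) and in the claim \eqref{eq:QM50} of Theorem~\ref{thm:QM}, but with the flat competitor controlled through semi-ellipticity rather than through the fact that the orthogonal projection cannot increase $\HM^m$. Each $E_k$ is $m$-rectifiable (being quasiminimal) and $E$ is $m$-rectifiable by Lemma~\ref{le:lscHM}\,\eqref{lscHM2}, so the purely unrectifiable parts contribute nothing and
\[
	\Phi_F(E\cap\mathcal{O})=\int_{E\cap\mathcal{O}}F(y,\Tan(E,y))\ud\HM^m(y),\qquad
	\Phi_F(E_k\cap\mathcal{O})=\int_{E_k\cap\mathcal{O}}F(y,\Tan(E_k,y))\ud\HM^m(y).
\]
The heart of the matter is the local estimate: for $\HM^m$-a.e.\ $x\in E$, writing $T=\Tan(E,x)$,
\[
	\lim_{r\to 0}\varliminf_{k\to\infty}\frac{\Phi_F(E_k\cap B(x,r))}{\omega_m r^m}\geq F(x,T).
\]

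To prove this, fix $\delta>0$. Since $F$ is continuous and $\grass{n}{m}$ is compact, $F$ is uniformly continuous near $\{x\}\times\grass{n}{m}$, so for $r$ small enough $F(y,S)\geq F(x,S)-\delta$ for all $y\in B(x,r)$ and all $S$; hence $\Phi_F(E_k\cap B(x,r))\geq \Phi_{F^x}(E_k\cap B(x,r))-\delta\,\HM^m(E_k\cap B(x,r))$. Because $F^x$ is position-independent and tangent planes are scale invariant, $\Phi_{F^x}(E_k\cap B(x,r))=r^m\Phi_{F^x}(E_{k,x,r})$ with $E_{k,x,r}=\scale{x,r}(E_k\cap B(x,r))$. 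Now the contradiction argument behind the claim \eqref{eq:QM50} in Theorem~\ref{thm:QM}, which uses only the local Hausdorff inclusion \eqref{eq:QM20} and quasiminimality, shows that for $k$ large and $r$ small the compact rectifiable set $E_{k,x,r}\cap\overline{B(0,1-2\varepsilon)}$ cannot be pushed into $T\cap\partial B(0,1-2\varepsilon)$ by a Lipschitz map fixing that sphere, i.e.\ it spans $T\cap\partial B(0,1-2\varepsilon)$. Rescaling by $(1-2\varepsilon)^{-1}$ to the unit ball and applying semi-ellipticity of $F$ against the flat disk $D=T\cap B(0,1)$, for which $\Phi_{F^x}(D)=F(x,T)\omega_m$, yields $\Phi_{F^x}(E_{k,x,r})\geq (1-2\varepsilon)^m F(x,T)\,\omega_m$. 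Combining these bounds, taking $\varliminf_{k}$, and controlling the error term by Lemma~\ref{le:lscHM}\,\eqref{lscHM3} (which gives $\varlimsup_k\HM^m(E_k\cap\overline{B(x,r)})\leq (1+Ch(0+))M(1+\varepsilon)\omega_m r^m$), then letting first $r\to0$ and finally $\delta,\varepsilon\to0$, proves the local estimate.

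To globalize I would argue as for \eqref{eq:lsc}. By Lusin's theorem I first pass to a compact $K\subseteq E\cap\mathcal{O}$ with $\HM^m((E\cap\mathcal{O})\setminus K)$ arbitrarily small and on which $y\mapsto\Tan(E,y)$ is continuous; since $F$ is bounded on the relevant compact set, $\Phi_F(E\cap\mathcal{O})$ differs from $\Phi_F(K)$ by a controlled amount. Using $\Theta^m(E,x)=1$ together with the continuity of $F$ and of the tangent map on $K$, I choose a countable disjoint Vitali family of balls $B_i=B(x_i,r_i)\subseteq\mathcal{O}$ covering $\HM^m$-almost all of $K$ and small enough that $\int_{E\cap B_i}F\,\ud\HM^m\leq (F(x_i,\Tan(E,x_i))+\delta)(1+\varepsilon)\omega_m r_i^m$, while the local estimate gives $\varliminf_k\Phi_F(E_k\cap B_i)\geq (1-2\varepsilon)^m F(x_i,\Tan(E,x_i))\,\omega_m r_i^m-\delta(1+Ch(0+))M(1+\varepsilon)\omega_m r_i^m$. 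Summing over finitely many of these balls, using their disjointness and $F>0$ so that $\sum_i\varliminf_k\Phi_F(E_k\cap B_i)\leq\varliminf_k\Phi_F(E_k\cap\mathcal{O})$, and letting the parameters tend to $0$, gives $\Phi_F(E\cap\mathcal{O})\leq\varliminf_{k\to\infty}\Phi_F(E_k\cap\mathcal{O})$.

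I expect the main obstacle to be the local estimate, and within it the verification of the spanning condition needed to invoke semi-ellipticity. This is precisely the step where the integrand genuinely uses its semi-ellipticity: for $F\equiv1$ one simply observes that the $1$-Lipschitz projection $T_{\natural}$ does not increase $\HM^m$, whereas for general $F$ the projection alters the tangent planes and that estimate is unavailable, so one must instead transfer the deformation-to-the-boundary argument behind \eqref{eq:QM50} into the spanning formulation of semi-ellipticity. A secondary, more routine point is the Lusin reduction ensuring that $\Tan(E,\cdot)$ varies continuously, which is what makes the values $F(y,\Tan(E,y))$ on each small ball comparable to $F(x_i,\Tan(E,x_i))$.
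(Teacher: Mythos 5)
The paper itself does not prove this lemma---it simply cites \cite{David:2014}*{Theorem 25.7} and \cite{Fang:2013}*{Theorem 2.5}---so the natural internal comparison is with the proof of Theorem \ref{thm:QMF}, whose structure (freeze the integrand at $x$, get a local lower bound from ellipticity against the flat disk, control errors by Lemma \ref{le:lscHM}) is essentially what you propose. Your overall architecture (local estimate plus Vitali/Lusin globalization as in \eqref{eq:lsc}) is the right one, and your globalization step is fine. The genuine gap is in the pivotal local step: the set $S_k=E_{k,x,r}\cap\overline{B(0,1-2\varepsilon)}$ does \emph{not} in general satisfy Almgren's spanning condition relative to $T\cap\partial B(0,1-2\varepsilon)$, and no deformation argument can show that it does, because the statement is false. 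The information available---the slab inclusion \eqref{eq:QM20} and the projection-covering property \eqref{eq:QM50}---does not attach $S_k$ to the sphere $T\cap\partial B(0,1-2\varepsilon)$; generically $S_k\cap T\cap\partial B(0,1-2\varepsilon)=\emptyset$, and then, these being disjoint compact sets, the map which is the identity on $T\cap\partial B(0,1-2\varepsilon)$ and sends all of $S_k$ to a single point $p\in T\cap\partial B(0,1-2\varepsilon)$ is Lipschitz on the union and extends to a Lipschitz map of $\mathbb{R}^n$ by Kirszbraun; it fixes the sphere and maps $S_k$ into it. Concretely, take $E_k=T+\varepsilon_k v$ with $v\perp T$ and $\varepsilon_k\downarrow 0$: each $E_k$ is a plane, hence quasiminimal with $M_k=1$, $h_k=0$, the sequence converges to $E=T$ with convergence of measures, and each $S_k$ is a disk hovering at positive height above $T$, which collapses as above. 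So semi-ellipticity cannot be invoked for $S_k$, and the inequality $\Phi_{F^x}(E_{k,x,r})\geq(1-2\varepsilon)^mF(x,T)\omega_m$ is unjustified. The point is that ``the projection covers the disk'' is strictly weaker than ``spans'': spanning is destroyed by an arbitrarily small vertical detachment from the boundary sphere, which is exactly the situation here.

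The repair is the flattening device in the paper's proof of Theorem \ref{thm:QMF}. One first applies the map $\Pi^{\varepsilon}$, which equals $T_{\natural}$ on the annulus $\{1-2\varepsilon\leq|y|\leq 1-\varepsilon\}$ and the identity on $B(0,1-3\varepsilon)$, and sets $\overline{E}_k=\Pi^{\varepsilon}(E_{k,x,r})\cap\overline{B(0,\rho)}$ with $1-2\varepsilon<\rho<\sqrt{1-2\varepsilon}$. This set genuinely contains $T\cap\partial B(0,\rho)$ and coincides with a subset of $T$ near that sphere, so it is attached to the set it must span; for such a set the quasiminimality-plus-Ahlfors-regularity contradiction (the one you wanted to import from \eqref{eq:QM50}) does rule out Lipschitz retractions onto $T\cap\partial B(0,\rho)$, and semi-ellipticity then gives $\Phi_{F^x}(\overline{E}_k)\geq F(x,T)\omega_m\rho^m$. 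Finally the bound must be transferred back to $E_{k,x,r}$, which works because the flattening changes nothing in $B(0,1-3\varepsilon)$:
\[
\Phi_{F^x}(E_{k,x,r})\geq\Phi_{F^x}(\overline{E}_k)-(\sup F)\,4^m\,\HM^m\left(E_{k,x,r}\setminus B(0,1-3\varepsilon)\right),
\]
and by Lemma \ref{le:lscHM}~\eqref{lscHM3} together with \eqref{eq:QMF21} the $\varlimsup_{k\to\infty}$ of the last term is of order $\varepsilon$. With the local estimate corrected in this way, the rest of your argument goes through.
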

\begin{proof}
	For a proof, see for example Theorem 25.7 in \cite{David:2014} or
	Theorem 2.5 in \cite{Fang:2013}, so we omit it here.
\end{proof}
\begin{theorem} \label{thm:QMF}
	Let $\{E_k\}$ be a sequence of sets such that $E_k\in QM(U,M_k,h_k)$.
	Suppose that $E_k\to E$ in $U$, $M=\varlimsup_{k\to\infty}M_m <+\infty$, and
	$h=\varlimsup_{m\to\infty} h_m$ satisfy that $h(0+)$ is small enough. 
	If $\Phi_F(E)=\lim_{k\to\infty}\Phi_F(E_k)<\infty$ for some elliptic
	integrand $F$, then $\var(E_k)\wc \var(E)$.
\end{theorem}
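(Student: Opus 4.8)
The plan is to follow the architecture of Theorem \ref{thm:QM} and Lemma \ref{le:convergence}, and to isolate the single place where the hypothesis $\HM^m(E)=\lim_k\HM^m(E_k)$ was used there, namely to force the weight of the limit varifold to have density one on $E$. The whole point of the argument is to recover that from the elliptic hypothesis instead. Since $\var(E_k)\wc\var(E)$ is a statement about testing against compactly supported functions, it suffices, by the precompactness furnished by the uniform bound \eqref{lscHM3}, to show that every $\wc$-limit $V$ of a subsequence of $\{\var(E_k)\}$ equals $\var(E)$. By Lemma \ref{le:lscHM}, $E$ is $m$-rectifiable, belongs to $QM(U,M,h)$, satisfies \eqref{lscHM1} and \eqref{lscHM3}, and has $\Theta^m(E,x)=1$ with a tangent plane $T=\Tan(E,x)$ for $\HM^m$-a.e.\ $x$; moreover $\HM^m(E)\le\varliminf_k\HM^m(E_k)\le(\inf F)^{-1}\varliminf_k\Phi_F(E_k)<\infty$. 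I also record that the projection estimate \eqref{eq:StrCon} holds for $\HM^m$-a.e.\ $x\in E$, by the very argument given for Theorem \ref{thm:QM}, which uses only quasiminimality and the Hausdorff convergence and is insensitive to which functional is assumed to converge.

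Fix such a subsequential limit $V$ and write $\mu=\|V\|$. Because $E$ is relatively closed and $E_k\to E$ in local Hausdorff distance, points far from $E$ carry no mass, so $\supp\mu\subseteq E$. The lower-semicontinuity \eqref{lscHM1} combined with the upper-semicontinuity of weak convergence on closed balls gives, exactly as in \eqref{eq:StrCon30}, that $\Theta_{\ast}^m(\mu,x)\ge\Theta^m(E,x)=1$ for $\HM^m$-a.e.\ $x\in E$, while \eqref{lscHM3} bounds $\Theta^{\ast m}(\mu,x)$ from above. Consequently $\mu=\theta\,\HM^m\mr E$ for some $\HM^m\mr E$-measurable density $\theta$ with $1\le\theta\le C$, and $V$ disintegrates as $\ud V(x,S)=\ud\nu_x(S)\,\theta(x)\,\ud(\HM^m\mr E)(x)$ with each $\nu_x$ a probability measure on $\grass{n}{m}$.

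The crux is a lower bound for the $\Phi_F$-density of $V$ by means of ellipticity. Let $\Lambda(A)=\int_{A\times\grass{n}{m}}F\,\ud V$, a Radon measure absolutely continuous with respect to $\HM^m\mr E$. I would prove that, for $\HM^m$-a.e.\ $x\in E$,
\[
	\frac{\ud\Lambda}{\ud(\HM^m\mr E)}(x)=\theta(x)\int F(x,S)\,\ud\nu_x(S)\ \ge\ F(x,T)+c(x)\bigl(\theta(x)-1\bigr).
\]
To this end I freeze the integrand at $x$, which is legitimate because $F$ is continuous and the relevant sets concentrate near $x$, and I apply the ellipticity inequality \eqref{eq:elliptic} with the disk $D=T\cap B(0,1)$ and the competitor $S=\scale{x,r}(E_k\cap B(x,r))$. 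As in the proof of Theorem \ref{thm:QM}, see \eqref{eq:QM50}, these rescaled sets project onto $T\cap B(0,1-2\varepsilon)$ and therefore cannot be carried into $T\cap\partial B(0,1)$ by a Lipschitz map fixing $T\cap\partial B(0,1)$, so \eqref{eq:elliptic} is available. Letting $k\to\infty$, then $r\to0$ and $\varepsilon\to0$, one has $\HM^m(\scale{x,r}(E_k\cap B(x,r)))\to\theta(x)\omega_m$ while $\Phi_{F^x}(\scale{x,r}(E_k\cap B(x,r)))$ approaches $\omega_m$ times the left-hand density, which yields the displayed inequality. I expect this transfer of the set-theoretic inequality \eqref{eq:elliptic} to the varifold blow-up to be the main obstacle: one must check the topological spanning condition at finite scale, absorb the frozen-coefficient error coming from the continuity of $F$, and justify the interchange of the limits in $k$, $r$ and $\varepsilon$.

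Integrating the displayed inequality over $E$ gives $\Phi_F(V)=\Lambda(U)\ge\Phi_F(E)+\int_E c(x)(\theta(x)-1)\,\ud\HM^m$. On the other hand, since $F\ge0$ and $\Phi_F(E_k)\to\Phi_F(E)$, testing $V$ against cutoffs $0\le\varphi\le1$ and letting $\varphi\nearrow1$ yields $\Phi_F(V)\le\varliminf_k\Phi_F(E_k)=\Phi_F(E)$. The two bounds squeeze, and since $c>0$ and $\theta\ge1$ they force $\theta\equiv1$ $\HM^m$-a.e.\ on $E$, hence $\mu=\HM^m\mr E$ and $\Theta^m(\mu,x)=1$ for a.e.\ $x\in E$. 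With density one in hand, the tangent-plane analysis of Lemma \ref{le:convergence} (the passage from \eqref{eq:StrCon40} to \eqref{eq:density112}, which now applies because \eqref{eq:StrCon40} holds) identifies $\VarTan(V,x)=\{\var(T)\}$ and gives $V=\var(E)$. As the subsequence was arbitrary, $\var(E_k)\wc\var(E)$; equivalently this shows $\HM^m(E)=\lim_k\HM^m(E_k)$ on every ball compactly contained in $U$, so that Theorem \ref{thm:QM} applies directly.
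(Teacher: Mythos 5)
Your overall strategy is the right one, and it is essentially the paper's: the hypothesis $\Phi_F(E)=\lim_k\Phi_F(E_k)$ must be converted, via ellipticity, into the upper density bound $\lim_{r\to0}\varlimsup_{k}\HM^m(\scale{x,r}(E_k\cap B(x,r)))\leq\omega_m$ at $\HM^m$-a.e.\ $x\in E$, after which the machinery of Lemma \ref{le:convergence} (through \eqref{eq:StrCon40}--\eqref{eq:density112}) takes over; and you are correct that the projection lower bound \eqref{eq:StrCon} carries over verbatim from the proof of Theorem \ref{thm:QM}, since that part never used the mass-convergence hypothesis. But there is a genuine gap at exactly the step you flag as the main obstacle. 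You apply the ellipticity inequality \eqref{eq:elliptic} with competitor $S=\scale{x,r}(E_k\cap B(x,r))$, justifying the spanning hypothesis by the fact that this set projects onto $T\cap B(0,1-2\varepsilon)$. That implication is false. The spanning condition requires that no Lipschitz map fixing $T\cap\partial B(0,1)$ carries $S$ into $T\cap\partial B(0,1)$, and \emph{any} compact set at positive distance from $T\cap\partial B(0,1)$ fails it trivially: send all of $S$ to a single point of $T\cap\partial B(0,1)$; this is Lipschitz on $S\cup(T\cap\partial B(0,1))$ and extends to $\mathbb{R}^n$ by Kirszbraun. By \eqref{eq:QM20} the rescaled set lies in the slab $\{y:\dist(y,T)\leq\varepsilon\}\cap\overline{B(0,1)}$, so it meets $\partial B(0,1)$ only at points \emph{near} but in general not \emph{on} $T\cap\partial B(0,1)$, and it may be disjoint from $T\cap\partial B(0,1)$ altogether. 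A concrete counterexample to your inference ($m=1$, $n=2$, $T$ the horizontal axis): the segment $[-(1-2\varepsilon),1-2\varepsilon]\times\{\varepsilon/2\}$ projects onto $T\cap\overline{B(0,1-2\varepsilon)}$, yet collapses to the boundary point $(1,0)$ by a Lipschitz map fixing $T\cap\partial B(0,1)$. So \eqref{eq:elliptic} is simply not available for the raw rescaled sets, and your key density inequality $\theta(x)\int F(x,S)\ud\nu_x(S)\geq F(x,T)+c(x)(\theta(x)-1)$ is unproven.

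This is precisely the difficulty the paper's proof is built to circumvent, and it is not a routine verification: one first flattens the rescaled set near the boundary with the map $\Pi^{\varepsilon}$ (identity inside $B(0,1-3\varepsilon)$, equal to $T_{\natural}$ on the annulus $1-2\varepsilon\leq|y|\leq1-\varepsilon$), then cuts with $\overline{B(0,\rho)}$ for $1-2\varepsilon<\rho<\sqrt{1-2\varepsilon}$; this choice of $\rho$ guarantees that $\overline{E}_k=\Pi^{\varepsilon}(E_{k,x,r})\cap\overline{B(0,\rho)}$ meets $\partial B(0,\rho)$ only inside $T$, and one then proves both $\overline{E}_k\supseteq T\cap\partial B(0,\rho)$ and the spanning condition by a quasiminimality-versus-Ahlfors-regularity contradiction argument (not by the projection property alone). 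Only for this modified competitor is \eqref{eq:elliptic} invoked, and the error introduced by the flattening is absorbed through the symmetric-difference estimate $\HM^m\left(\Pi^{\varepsilon}(\scale{x,r}(E_k))\sd\scale{x,r}(E_k)\right)\lesssim\varepsilon$ together with the frozen-coefficient modulus $\omega(x,r)$. Your endgame (the squeeze $\Phi_F(V)\leq\Phi_F(E)$ against the integrated density inequality, forcing $\theta\equiv1$, then the tangent identification of Lemma \ref{le:convergence}) is a sound reorganization of the paper's conclusion at the varifold level; but without the flattened competitor the central use of ellipticity has no justification, so the proof as proposed does not go through.
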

\begin{proof}
	For any $x\in \mathbb{R}^n$ and $r>0$, we define integrand
	$F_{x,r}$ by given 
	\[
		F_{x,r}(y,T)=F(\scale{x,r}(y),T), \text{ for } (y,T)\in 
		\mathbb{R}^n\times \grass{n}{m}.
	\]
	Then $F_{x,r}$ is also elliptic, and $F^x=\lim_{r\to 0}F_{x,r}$. Since
	$\Phi_F(E)=\lim_{k\to\infty}\Phi_F(E_k)<\infty$, we see that 
	\[
		\Phi_{F_{x,r}}(\scale{x,r}(E))=\lim_{k\to\infty}\Phi_{F_{x,r}}
		(\scale{x,r}(E_k))<\infty.
	\]
	We will put $U_{x,r}=\scale{x,r}(U)$, $h_{k,r}(t)=h(rt)$, $B_{x,r}=B(x,r)$,
	$E_{k,x,r}=\scale{x,r}(E_k\cap B(x,r))$, $E_{x,r}=\scale{x,r}(E)$,
	and $B=B(0,1)$ for convenient. Then $\scale{x,r}(E_k)\in
	QM(U_{x,r},M,h_{k,r})$ and $E_{k,x,r}\in QM(B,M,h_{k,r})$. By Lemma \ref{le:lscHMF}, we have that  
	\[
		\Phi_{F_{x,r}}\left(\scale{x,r}(E)\cap \left(U_{x,r}\setminus
		\overline{B}\right)\right)\leq \varliminf_{k\to \infty}
		\Phi_{F_{x,r}}\left(\scale{x,r}(E_k)\cap \left(U_{x,r}\setminus
		\overline{B}\right)\right),
	\]
	thus 
	\begin{equation}\label{eq:QMF10}
		\Phi_{F_{x,r}}\left(\scale{x,r}(E)\cap \overline{B}\right)\geq
		\varlimsup_{k\to \infty}\Phi_{F_{x,r}}\left(\scale{x,r}(E_k)\cap
		\overline{B}\right).
	\end{equation}

	We see that for $\HM^m$-a.e $x\in E$, $\Tan(E,x)$ exists and
	$\Theta^m(E,x)=1$. For any $\varepsilon>0$, we take
	$0<r_{\varepsilon}<\dist(x,U^c) $ and $k_{\varepsilon}>0$ such that, for any
	$0<r<r_{\varepsilon}$ and $k\geq k_{\varepsilon}$, 
	\begin{equation}\label{eq:QMF20}
		\scale{x,r}(E_k\cap B(x,r)) \subseteq \Tan(E,x)+B(0,\varepsilon) 
	\end{equation}
	and 
	\begin{equation}\label{eq:QMF21}
		(1-\varepsilon)\omega_m r^m\leq \HM^m(E\cap B(x,r))\leq (1+\varepsilon)
		\omega_m r^m.
	\end{equation}

	Let $g_1:\mathbb{R}\to \mathbb{R}$ be a function of class $C^{\infty}$ such
	that $0\leq g_1\leq 1$, $g_1(t)=0$ for $t\in (-\infty, 1-3 \varepsilon]\cup
	[1,+\infty)$, $g_1(t)=1$ for $t\in [1-2 \varepsilon,1-\varepsilon]$, and
	$\|Dg_1\|\leq 2/\varepsilon$. We let $\Pi^{\varepsilon}:\mathbb{R}^n\to
	\mathbb{R}^n$ be the mapping defined by
	\[
		\Pi^{\varepsilon}(x)=(1-g_{1}(|x|))x+g_1(|x|)T_{\natural}(x),
	\]
	take $1-2 \varepsilon< \rho <\sqrt{1-2 \varepsilon}$ and
	$\overline{E}_k=\Pi^{\varepsilon}(E_{k,x,r})\cap \overline{B(0,\rho)}$.
	We claim that $\overline{E}_k\supseteq \partial B(0,\rho)\cap T$ and
	$\overline{E}_k$ cannot be mapped into $\partial B(0,\rho)\cap T$ by any
	Lipschitz mapping which leaves $\partial B(0,\rho)\cap T$ fixed, where
	$T=\Tan(E,x)$ and $k\geq k_{\varepsilon}$. Suppose for the sake of 
	contradiction there is Lipschitz mapping $\varphi$ such that
	$\varphi\vert_{B(0,\rho)^c}=\id$ and $\varphi(\overline{E}_k)\subseteq T\cap
	\partial B(0,\rho)$. Indeed, by putting $T^{\varepsilon}=[T+B(0, 
	\varepsilon)] \cap B(0,1)$ and $\overline{\varphi}=\varphi\circ
	\Pi^{\varepsilon}\circ \scale{x,r}$, we have that  
	\[
		\Lip(\Pi^{\varepsilon}\vert_{T^{\varepsilon}})\leq 4,
	\]
	and 
	\[
		\begin{aligned}
			\HM^m(\overline{\varphi}(E_k\cap
			B(x,r)))&=\HM^m(\Pi^{\varepsilon}\circ
			\scale{x,r}(E_k\cap B(x,r))\setminus B(0,\rho))\\
			&\leq 4^mr^{-m}\HM^m(E_k\cap B(x,r)\setminus 
			B(x,(1-3 \varepsilon)r)),
		\end{aligned}
	\]
	thus by Lemma \ref{le:lscHM}, we have that 
	\[
		\begin{aligned}
			\HM^m(E\cap B(x,r))&\leq \varliminf_{k\to \infty}\HM^m(E_k\cap
			B(x,r))\leq 
			\varliminf_{k\to \infty}(Mr^m\HM^m(\overline{\varphi}(E_k\cap
			B(x,r)))+h_k(r)r^m)\\
			&\leq (4^mM^2(1+Ch(0+))(3m+3)\omega_m \varepsilon +h(r))r^m,
		\end{aligned}
	\]
	since
	\[
		\varlimsup_{k\to \infty}\HM^m(E_k\cap A_{\varepsilon})\leq 
		M(1+Ch(0+))\HM^m(E\cap A_{\varepsilon})\leq M(1+Ch(0+)) (3m+2)\omega_m
		\varepsilon;
	\]
	this contradict with the local Ahlfors regularity of $E$ in case $h(0+)$
	small enough, and the claim is true.
	
	We continue to do the estimation, in fact we would like to get the same
	estimation as in \eqref{eq:StrCon40}, then we use the same technique to get
	the varifold convergence. For convenient, we denote by $X\,\triangle\, Y$ 
	the symmetric difference $(X\setminus Y) \cup (Y\setminus X)$ for any sets
	$X,Y\subseteq \mathbb{R}^n$. Then we have that 
	\[
		\begin{aligned}
			\HM^m \left(\Pi^{\varepsilon}(\scale{x,r}(E_k))\,\triangle\,
			\scale{x,r}(E_k) \right)&\leq
			\left(\Lip(\Pi^{\varepsilon}\vert_{T^{\varepsilon}})^{m}+1\right)\HM^m
			\left(\scale{x,r}(E_k)\cap B\setminus B(0,1-3 \varepsilon)\right)\\
			&\leq (4^m+1)r^{-m}(E_{k}\cap B(x,r)\setminus B(x,(1-3 \varepsilon)r)),
		\end{aligned}
	\]
	thus 
	\[
		\varlimsup_{k\to \infty}\HM^m \left(\Pi^{\varepsilon}(\scale{x,r}(E_k))\,\triangle\,
		\scale{x,r}(E_k) \right)\leq (4^m+1)\cdot M(1+Ch(0+))(3m+2) \omega_m \varepsilon
	\]
	and
	\begin{equation}\label{eq:QM40}
		\begin{aligned}
			\varlimsup_{k\to \infty} \Phi_{F_{x,r}}(\overline{E}_k)&\leq
			\varlimsup_{k\to \infty}\Phi_{F_{x,r}}(\Pi^{\varepsilon}\circ 
			\scale{x,r}(E_k)\cap B)\\
			&\leq \varlimsup_{k\to \infty}\Phi_{F_{x,r}}
			(E_{k,x,r})+(\sup F)(4^m+1) M(1+Ch(0+))(3m+2) \omega_m \varepsilon\\
			&\leq \Phi_{F_{x,r}}( \scale{x,r}(E)\cap \overline{B})+C_1 \varepsilon,
		\end{aligned}
	\end{equation}
	where $C_1=(\sup F)(4^m+1) M(1+Ch(0+)) (3m+2) \omega_m$.
	On the other hand, we see from Theorem (1)(a) in Section 3.5 in
	\cite{Allard:1972} that for $\HM^m$-a.e. $x\in E$, 
	\[
		\lim_{r\to 0} r^{-m}\int_{E\cap B(x,r)} F^x(\Tan(E,y)) \ud \HM^m(y) =
		F^x(\Tan(E,x)) \omega_m,
	\]
	we get that 
	\[
		\lim_{r\to 0} \Phi_{F^x}(\scale{x,r}(E)\cap \overline{B})=
		\lim_{r\to 0} \Phi_{F^x}(\scale{x,r}(E)\cap B)=F^x(\Tan(E,x)) \omega_m.
	\]
	We put $\omega(x,r)=\sup\{|F(y,S)-F(x,S)|:|y-x|\leq r, S\in \grass{n}{m}\}$.
	Then $\omega(x,r)\to 0$ as $r\to 0$; and 
	\[
		|\Phi_{F^{x}}(\scale{x,r}(E_k)\cap B)-\Phi_{F_{x,r}}(\scale{x,r}(E_k)\cap
		B)|\leq \omega(x,r)r^{-m}\HM^m(E_k\cap B(x,r)).
	\]
	We get so that 
	\[
		\begin{aligned}
			\varlimsup_{k\to \infty}\Phi_{F^{x}}(\overline{E}_k)&\leq  \omega(x,r)
			\varlimsup_{k\to \infty}\HM^m(\overline{E}_k)+
			\varlimsup_{k\to \infty} \Phi_{F_{x,r}}(\overline{E}_k)\\
			&\leq \Phi_{F_{x,r}}( E_{x,r}\cap \overline{B})+C_1 \varepsilon+ 
			\omega(x,r) \varlimsup_{k\to \infty}\HM^m(\overline{E}_k)\\
			&\leq \Phi_{F^x}( E_{x,r}\cap \overline{B})+C_1 \varepsilon+ 
			\omega(x,r) \varlimsup_{k\to \infty}\HM^m(\overline{E}_k).
		\end{aligned}
	\]
	Since $F$ is elliptic, we have that 
	\[
		\Phi_{F^{x}}(\overline{E}_k)-\Phi_{F^{x}}(T\cap B(0,\rho))\geq
		c(x)(\HM^m(\overline{E}_k)-\HM^m(T\cap B(0,\rho))),
	\]
	thus
	\[
		\HM^m(\overline{E}_k)\leq \omega_m\rho^m+c(x)^{-1}(\Phi_{F^{x}}
		(\overline{E}_k)-F(x,T)\omega_m\rho^m).
	\]
	Hence 
	\[
		\begin{aligned}
			\varlimsup_{k\to \infty}\HM^m(\overline{E}_k)&\leq\omega_m\rho^m-c(x)^{-1}
			F(x,T)\omega_m\rho^m\\
			&\quad +c(x)^{-1}[\Phi_{F^{x}} (E_{x,r}\cap
			\overline{B})+C_1 \varepsilon+ \omega(x,r) \varlimsup_{k\to
			\infty}\HM^m(\overline{E}_k)],
		\end{aligned}
	\]
	and 
	\[
		\begin{aligned}
		(1-c(x)^{-1}\omega(x,r))\varlimsup_{k\to \infty}\HM^m(\overline{E}_k)&\leq
		\omega_m\rho^m +c(x)^{-1}[\Phi_{F^{x}} (E_{x,r}\cap
		\overline{B})-	F(x,T)\omega_m\rho^m+C_1 \varepsilon]\\
			&\leq \omega_m\rho^m +c(x)^{-1}F(x,T)\omega_m(1-\rho^m)+
			c(x)^{-1}C_1 \varepsilon\\
			&\leq \omega_m +c(x)^{-1}F(x,T)\omega_m \cdot 2m \varepsilon+c(x)^{-1}C_1
			\varepsilon.
		\end{aligned}
	\]
	Thus 
	\[
		\begin{aligned}
			\HM^m(\scale{x,r}(E_k)\cap B)&\leq \HM^m(\scale{x,r}(E_k)\cap
			B\setminus B(1-3 \varepsilon))+\HM^m(\scale{x,r}(E_k)\cap B(0,1-3
			\varepsilon))\\
			&\leq \HM^m(\scale{x,r}(E_k)\cap B\setminus B(1-3
			\varepsilon))+\HM^m(\overline{E}_k),
		\end{aligned}
	\]
	and 
	\[
		\begin{aligned}
			\varlimsup_{k\to \infty}\HM^m(\scale{x,r}(E_k)\cap B)&\leq\varlimsup_{k\to
			\infty}\HM^m(\overline{E}_k) + M(1+Ch(0+))(3m+2)\omega_m \varepsilon\\
			&\leq [1-c(x)^{-1}\omega(x,r)]^{-1}(\omega_m +c_1(x)
			\varepsilon)+C_2 \varepsilon,
		\end{aligned}
	\]
	where $c_1(x)=c(x)^{-1}F(x,T)\omega_m \cdot 2m+c(x)^{-1}C_1$, and $C_2=
	M(1+Ch(0+))(3m+2)\omega_m$.
	We get so that 
	\[
		\lim_{r\to 0}\varlimsup_{k\to \infty}\HM^m(\scale{x,r}(E_k)\cap B)\leq
		\omega_m +c_1(x) \varepsilon)+C_2 \varepsilon,
	\]
	let $\varepsilon$ tend to 0, we  get that 
	\[
		\lim_{r\to 0}\varlimsup_{k\to \infty}\HM^m(\scale{x,r}(E_k)\cap B)\leq
		\omega_m.
	\]
	But we see from the proof of Theorem \ref{thm:QM} that 
	\[
		\lim_{r\to 0}\varliminf_{k\to \infty}\HM^m(T_{\natural}\circ\scale{x,r}(E_k\cap
		B(x,r)))\geq \omega_m,
	\]
	we get so that for $\HM^m$-a.e. $x\in E$.
	\[
		\lim_{r\to 0}\lim_{k\to \infty}\HM^m(\scale{x,r}(E_k\cap
		B(x,r)))=\lim_{r\to 0}\lim_{k\to \infty}\HM^m(T_{\natural}
		\circ\scale{x,r}(E_k\cap B(x,r)))=\omega_m.
	\]
	Then similar to the proof of Lemma \ref{le:convergence}, we conclude that 
	\[
		\var(E_k)\wc \var(E).
	\]
\end{proof}

\begin{bibdiv}
\begin{biblist}

\bib{Allard:1972}{article}{
      author={Allard, W.},
       title={On the first variation of a varifold},
        date={1972},
     journal={Ann. of Math.},
      volume={95},
      number={3},
}

\bib{Almgren:1974}{article}{
      author={Almgren, F.~J.},
       title={The structure of limit varifolds associated with minimizing
  sequences of mappings},
        date={1974},
     journal={Sympos. Math.},
      volume={14},
       pages={413\ndash 428},
}

\bib{David:2003}{article}{
      author={David, G.},
       title={Limits of {Almgren} quasiminimal sets},
        date={2003},
     journal={Proceedings of the conference on Harmonic Analysis, Mount
  Holyoke, A.M.S. Contemporary Mathematics series},
      volume={320},
       pages={119\ndash 145},
}

\bib{David:2009}{article}{
      author={David, G.},
       title={H\"older regularity of two-dimensional almost-minimal sets in
  {$\mathbb{R}^n$}},
        date={2009},
     journal={Annales de la faculté des sciences de Toulouse},
      volume={18},
      number={1},
       pages={65\ndash 246},
}

\bib{David:2014}{article}{
      author={David, G.},
       title={Local regularity properties of almost and quasiminimal sets with
  a sliding boundary condition},
        date={2014},
      eprint={http://arxiv.org/abs/1401.1179},
      status={Preprint},
}

\bib{DS:2000}{book}{
      author={David, G.},
      author={Semmes, S.},
       title={Uniform rectifiability and quasiminimizing sets of arbitrary
  codimension},
      series={Memoirs of the A.M.S. 687},
        date={2000},
      volume={144},
}

\bib{Fang:2013}{article}{
      author={Fang, Yangqin},
       title={Existence of minimizers for the {Reifenberg Plateau} problem},
        date={2016},
     journal={Ann. Scuola Norm. Sup. Pisa Cl. Sci.},
      volume={16},
      number={5},
       pages={817\ndash 844},
}

\bib{Federer:1969}{book}{
      author={Federer, H.},
       title={Geometric measure theory},
   publisher={Springer-Verlag, New York},
        date={1969},
}

\bib{Mattila:1995}{book}{
      author={Mattila, P.},
       title={Geometry of sets and measures in euclidean spaces: Fractals and
  rectifiability},
      series={Cambridge Studies in Advanced Mathematics},
   publisher={Cambridge U. Press},
        date={1995},
      volume={44},
}

\end{biblist}
\end{bibdiv}

\end{document}